\newtheorem{theorem}{Theorem}[section]
\newtheorem{lemma}[theorem]{Lemma}
\newtheorem{remark}[theorem]{Remark}
\newtheorem{proposition}[theorem]{Proposition}
\newtheorem{question}{Question}
\newtheorem{definition}[theorem]{Definition}
\newtheorem{conjecture}{Conjecture}
\theoremstyle{definition}
\newcommand{\cA}{\mathcal{A}}
\newcommand{\cB}{\mathcal{B}}
\newcommand{\cG}{\mathcal{G}}
\newcommand{\cH}{\mathcal{H}}
\newcommand{\cL}{\mathcal{L}}
\newcommand{\cM}{\mathcal{M}}
\newcommand{\cN}{\mathcal{N}}
\newcommand{\cU}{\mathcal{U}}
\newcommand{\cV}{\mathcal{V}}
\newcommand{\cW}{\mathcal{W}}
\newcommand{\bA}{\mathbf{A}}
\newcommand{\M}{\mathbb{M}}
\newcommand{\bM}{\boldsymbol{M}}
\newcommand{\N}{\mathbb N}
\newcommand{\e}{\varepsilon}
\renewcommand{\L}{\mathbb L}
\newcommand{\cQ}{\mathcal{Q}}
\def\set4{\mathcal I}
\def\tup14{(1,2,3,4)}
\def\be{{\mathbf e}}
\def\bw{{\mathbf w}}
\newtheorem{example}{Example}
\newcommand{\R}{\mathbb{R}}
\newcommand{\D}{\mathbb D}
\newcommand{\de}{\delta} %%%%%\delta
\newcommand{\ga}{\gamma}
\newcommand{\wt}{\widetilde}
\newcommand{\BL}{\textup{BL}}
\newcommand{\dir}{\textup{dir}}
\newcommand{\tx}{\textup}
\newcommand{\bV}{\boldsymbol{V}}
\newcommand{\Atr}{A_{\textup{trans}}}
\newcommand{\Aloc}{A_{\textup{loc}}}
\newcommand{\vbw}{\vec\bw}
\begin{document}

 \author{Shengwen Gan}
 \address{Department of Mathematics\\
 University of Wisconsin-Madison, WI 53706, USA}
 \email{sgan7@wisc.edu}

\keywords{Kakeya, Projection theory, Brascamp-Lieb inequality, Grassmannian}
\subjclass[2020]{28A75, 28A78}

\date{}

\title[Kakeya problem and projection problem in Grassmannian]{Kakeya problem and projection problem for $k$-geodesics in Grassmannians}

\begin{abstract}
The Kakeya problem in $\R^n$ is about estimating the size of union of $k$-planes; the projection problem in $\R^n$ is about estimating the size of projection of a set onto every $k$-plane ($1\le k\le n-1$). The $k=1$ case has been studied on general manifolds in which $1$-planes become geodesics, while $k\ge 2$ cases were still only considered in $\R^n$. We formulate these problems on homogeneous spaces, where $k$-planes are replaced by $k$-dimensional totally geodesic submanifolds. After formulating the problem, we prove a sharp estimate for Grassmannians.
\end{abstract}

\maketitle

\section{Introduction}

The goal of this paper is to study the Kakeya problem and projection problem for $k$-geodesics in homogeneous spaces. Here, $k$-geodesic refers to $k$-dimensional totally geodesic submanifold. We begin with the introduction of Kakeya problem and projection problem in $\R^n$.

Kakeya problem is one of the famous problem in harmonic analysis. We first state its original line-version as follows.
\begin{conjecture}[Kakeya conjecture]
    If a set $K\subset \R^n$ contains a line in each direction, then $\dim(K)=n$. (Here and throughout the paper, $\dim$ will denote the Hausdorff dimension.) 
\end{conjecture}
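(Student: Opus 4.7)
The final statement is the classical Kakeya set conjecture, a celebrated open problem in harmonic analysis. It is known for $n\le 2$ (Davies, 1971) and was very recently established for $n=3$ by Wang and Zahl, but remains open for $n\ge 4$. I will therefore sketch the framework I would actually try to push through, and identify honestly where the obstruction lies, rather than claim a complete argument.

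My first step would be the standard reduction to a $\delta$-discretized inequality. Given $K\subset \R^n$ containing a unit segment in every direction, a Frostman/pigeonhole argument replaces $K$ by a $\delta$-neighbourhood of a family $\mathbb{T}$ of $\delta$-tubes with $\delta$-separated directions, and reduces the Hausdorff-dimension claim to an estimate of the form $|\bigcup_{T\in\mathbb{T}}T|\gtrsim \delta^{\varepsilon}|\mathbb{T}|\delta^{n-1}$ for every $\varepsilon>0$. This converts a fractal-geometry question into an incidence/combinatorial inequality about $\delta$-tubes, which is the only form in which modern tools can attack it.

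Next I would run a broad/narrow decomposition on this discretized inequality. On the \emph{broad} part, where each spatial point meets tubes in many quantitatively transverse directions, the multilinear Kakeya inequality of Bennett--Carbery--Tao gives essentially sharp control. On the \emph{narrow} part I would invoke polynomial partitioning (Guth), using Wolff's hairbrush bound as base case and proceeding by induction on scales; this is how the Katz--Tao and Katz--Zahl improvements beyond $(n+2)/2$ are obtained. The goal is to absorb the narrow contribution into the broad one at every scale via a two-ends/Córdoba-type multi-scale analysis.

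The main obstacle, and the reason the conjecture is open for $n\ge 4$, is the \emph{sticky Kakeya} regime: tube configurations that concentrate along lower-dimensional families and thereby evade every multilinear estimate. In $\R^3$ Wang and Zahl eliminated this regime by a structural theorem exploiting the geometry of $\mathrm{Gr}(1,3)$, but in $n\ge 4$ the corresponding Grassmannian carries much richer SL-type nested incidences and no analogous classification is currently available. Any genuine proof would require either a higher-dimensional sticky-Kakeya structure theorem or an $L^p$ incidence estimate strong enough to absorb sticky configurations directly; I would expect to spend the bulk of the effort precisely here, and at present neither route is within reach.
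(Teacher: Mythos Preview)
Your assessment is appropriate: the statement is the classical Kakeya conjecture, and the paper does \emph{not} prove it. It is stated there as a \texttt{conjecture} environment, immediately followed by the remark ``The original conjecture is very hard,'' after which the paper pivots to the variant (Theorem~\ref{unionofkplane}) in which the direction-separation hypothesis is dropped. There is therefore no proof in the paper to compare your proposal against.

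Your write-up correctly identifies the problem as open for $n\ge 4$, accurately summarizes the state of the art (Davies for $n=2$, Wang--Zahl for $n=3$), and honestly locates the obstruction in the sticky regime. That is the right thing to do with a statement the paper itself leaves unproved; no correction is needed.
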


Bourgain also considered the $k$-plane version, which is known as the $(k,d)$-problem. See \cite{bourga1991besicovitch}.

The original conjecture is very hard. However, if we consider a variant of the Kakeya conjecture by removing the condition that the lines point in different directions, then we have the complete answer. See the following result proved by the author in \cite{gan2023hausdorff}. 

\begin{theorem}\label{unionofkplane}
Let $0<k\le d<n$ be integers and $\beta\in [0,k+1]$. Let $\cV\subset A(k,n)$ be a set of $k$-planes in $\R^n$, with $\dim(\cV)= (k+1)(d-k)+\beta$. Then,
\begin{equation}\label{mainbound}
    \dim(\bigcup_{V\in\cV}V)\ge d+\min\{1,\beta\}.
\end{equation}  

\end{theorem}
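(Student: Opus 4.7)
The plan is to reduce the Hausdorff dimension statement to a discretized volume estimate at scale $\delta$, and then use a flag decomposition combined with a Brascamp--Lieb / multilinear Kakeya estimate for $k$-plane tubes. Via a Frostman measure on $\mathcal{V}$ and standard pigeonholing, it suffices to prove that if $\mathcal{V}_\delta\subset A(k,n)$ is any $\delta$-separated family of cardinality $\gtrsim\delta^{-(k+1)(d-k)-\beta+\varepsilon}$, then the $\delta$-neighborhood $K_\delta:=\bigcup_{V\in\mathcal{V}_\delta}V^\delta\cap B^n(0,1)$ satisfies $|K_\delta|\gtrsim\delta^{n-d-\min\{1,\beta\}+O(\varepsilon)}$. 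A standard multiscale / covering argument then converts this into \eqref{mainbound}.

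The base case $\beta=0$ is the ``flag case'': the count $(k+1)(d-k)$ matches the dimension of the space of $k$-planes contained in a fixed $d$-plane of $\R^n$, so after pigeonholing direction $\dir(V)\in G(k,n)$ and translation one obtains $\sim\delta^{-(k+1)(d-k)+O(\varepsilon)}$ planes that collectively $\delta$-cover a single $d$-plane, yielding $|K_\delta|\gtrsim\delta^{n-d+O(\varepsilon)}$. If $\mathcal{V}$ does not sit near a single flag, one partitions into flag classes using a pigeonhole on the approximate $d$-plane best fitting each $k$-plane, then applies the single-flag bound to the largest class.

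For $\beta\in(0,k+1]$, the extra $\beta$ dimensions force the family to spread across $\gtrsim\delta^{-\beta}$ essentially distinct $d$-flags. The contribution of each flag is a $\delta$-slab about a $d$-plane, and these slabs are quantitatively transverse. A Brascamp--Lieb / multilinear Kakeya inequality applied to the $k$-plane-in-$d$-plane configuration, twisted with a single escape direction out of a reference flag, upgrades this transversality to $|K_\delta|\gtrsim\delta^{n-d-\beta+O(\varepsilon)}$ while $\beta\le 1$. The saturation at $\min\{1,\beta\}=1$ reflects that the escape out of any fixed $d$-flag is only one-dimensional; going further would demand iterating with a $(d+1)$-flag and hence a separate bootstrapping step.

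The main obstacle is setting up the correct Brascamp--Lieb datum so that the flag transversality becomes quantitatively sharp, and arranging the pigeonholing so that the parameters $(k+1)(d-k)$ and $\beta$ cleanly decouple into a ``within-flag'' count and an ``across-flag'' count. A secondary technical issue is concentration of $\mathcal{V}$ near a proper subvariety of $A(k,n)$, which likely requires an inductive two-ends or multiscale argument on the level of concentration before the Brascamp--Lieb step can be applied.
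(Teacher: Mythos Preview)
Your high-level ingredients --- Frostman reduction to a $\delta$-discretized volume estimate, Brascamp--Lieb as the source of the transversal gain, some multiscale induction --- are the right ones and are what the paper uses. But the organizing structure you propose, a global ``flag decomposition'' of $\cV$ by an approximating $d$-plane, does not work: a $k$-plane has no canonical containing $d$-plane (the family of $d$-planes through a fixed $k$-plane has dimension $(d-k)(n-d)$), so ``the approximate $d$-plane best fitting each $k$-plane'' is not well-defined, your $\beta=0$ pigeonhole is unjustified, and a $\delta$-separated family of size $\delta^{-(k+1)(d-k)}$ need neither sit near nor $\delta$-cover any single $d$-plane.

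The paper runs the dichotomy \emph{pointwise}, inside an $L^p$ estimate (Theorem~\ref{kakeyathm} with $l=0$). At each point $x$ one looks at the bush $\cV(x)$ of $k$-planes through $x$, parametrized by directions in $G(k,n)$, and asks whether these directions concentrate near $G(k,\Pi)$ for some $\Pi\in G(d,n)$. In the broad case one extracts $d-k+2$ transverse $K^{-n}$-caps and applies Maldague's Brascamp--Lieb inequality; the BL-constant computation (the case split on $\dim U$ around Lemma~\ref{easylem}) is where the range of $p$ in \eqref{rangeofp}, and hence the numerology $d+\min\{1,\beta\}$, emerge. In the narrow case all tubes through $x$ lie in a $K^{-1}$-slab about a $d$-plane; rescaling that slab returns one to the same problem at scale $\delta K$, closing an induction on scales. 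The Frostman non-concentration hypothesis \eqref{condition} --- not mere $\delta$-separation --- is exactly what survives this rescaling, so the concentration issue you flag as ``secondary'' is in fact the primary structural obstacle, and pointwise broad--narrow plus induction on scales is how it is resolved.
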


\begin{remark}
    {\rm
    We give some historical remarks about this problem. D. Oberlin \cite{oberlin2011exceptional} initiated the study of this problem. Later, H\'era formulated the conjecture (see Conjecture 1.16 in \cite{hera2018hausdorff}). By using the multilinear Kakaya, Zahl \cite{zahl2022unions} proved the problem for union of lines. Finally, based on the idea of Zahl, the author completely solved the problem for all dimensions.
    }
\end{remark}

It is very natural to replace the ambient space $\R^n$ by a general Riemannian manifold, and consider the Kakeya problem for $k$-planes in a manifold. The first question is: what are $k$-planes in a manifold? When $k=1$, the natural analog are the geodesics in a manifold. Hence, the line-version Kakeya problem in a manifold is about estimating the size of unions of geodesics. For general $k$, to author's knowledge, it did not appear in any previous reference. In this paper, we will suggest a way to formulate this problem.

Naturally, the counterpart for $k$-planes in a manifold should be $k$-dimensional totally geodesic submanifolds. However, not all manifolds have $k$-dimensional totally geodesic submanifolds. Another thing we are concerned about is the behavior of $k$-dimensional totally geodesic submanifolds at every point. In $\R^n$, we know that for every point, the set of $k$-planes through the point can be identified with $G(k,n)$, which is the same for every point. However, this may not be true for a general manifolds. These two issues limit us to focus on a smaller range of manifolds which have many totally geodesic submanifolds and have many symmetries, for example, the homogeneous spaces.

Let $\cM=G/H$ be a compact homogeneous space where $G$ and $H$ are compact Lie groups. One of the most important examples of the compact homogeneous spaces is the Grassmannian.
    \[G(l,n)\cong O(n)/O(l)\times O(n-l).\]
%Let $x_0\in M$ correspond to the identity in $G$. 
If $\cM=G/H$ is a homogeneous space, then there exists a Riemannian metric on $\cM$ that is invariant under the group action by $G$. Therefore, $\cM$ is naturally equipped with a Riemannian metric and hence is a Riemannian manifold. We can talk about its totally geodesic submanifolds. 
\begin{definition}
Let $\cM$ be a homogeneous space. We define $\cA_k(\cM)$ to be the set of $k$-dimensional, complete, totally geodesic submanifolds of $\cM$.
    For $x\in \cM$, we define $\cG_k(x,\cM)$ to be the set of $k$-dimensional, complete, totally geodesic submanifolds of $\cM$ that contain $x$.
\end{definition}

For convenience, we just call ``$k$-dimensional, complete, totally geodesic submanifold" as ``$k$-geodesic". With the standard metric on $\R^n$, we see that $\cA_k(\R^n)=A(k,n)$ and $ \cG_k(x,\R^n)=x+G(k,n)$, which is the reason for our notation.
We also see that $\cA_k(S^n)$ is the set of $k$-dimensional great spheres, and $\cG_k(x,S^n)$ is the set of $k$-dimensional great spheres that contain $x$.
     Since the Riemannian metric is invariant under the group action, we have $\cG_k(x,\cM)=(\wt x\wt y^{-1})\cdot\cG_k(y,\cM)$, where $\wt x$ $\wt y\in G$ are elements in the cosets $x$ and $y$.  In other words, for $\cN\in \cG_k(x,\cM)$, the left multiplication gives a submanifold $(\wt x\wt y^{-1})\cdot \cN$ in $\cG_k(y,\cM).$
     
     In most cases, there is a natural parametrization on $\cA_k(\cM)$ so that we can view $\cA_k(\cM)$ as a Riemannian manifold and talk about the Hausdorff dimension of its subsets. The same is true for $\cG_k(x,\cM)$.

\subsection{Kakeya problem for \texorpdfstring{$k$}{}-geodesics }

Motivated by Theorem \ref{unionofkplane}, we ask the general question.

\begin{question}\label{Q1}
    Let $\cM$ be a homogeneous space. Fix an integer $1\le k<\dim(\cM)$. Let $\cV\subset \cA_k(\cM)$. How to estimate
    \begin{equation}
        \dim(\bigcup_{\cN\in \cV}\cN)\ ?
    \end{equation}
\end{question}

Let us focus on a specific example $A(l,n)$, which is a generic open subset of the compact homogeneous space $G(l+1,n+1)$. We identify $A(l,n)$ as an open subset of $G(l+1,n+1)$ in the following way. For any $L\in G(l+1,n+1)$ that is not parallel to $\R^n\times \{1\}$, we see that $(\R^n\times \{1\})\cap L$ is an $l$-dimensional plane in $\R^n\times \{1\}$. This correspondence $L\mapsto (\R^n\times \{1\})\cap L$ identifies a generic open subset of $G(l+1,n+1)$ with $A(l,\R^n\times \{1\})\cong A(l,n)$.

The reason that $A(l,n)$ is an interesting object to study is because $A(l,n)$ is one of the important homogeneous spaces. Also, when $l=0$, we have $A(0,n)=\R^n$, which recovers the most standard space, the Euclidean space.

Let us talk about the totally geodesic submanifolds.
In this paper, we are not going to consider all the $k$-geodesics, but just those that naturally appear. Let us describe them. Let $l\le m\le n$, and $V$ be an $m$-dimensional plane in $\R^n$. Then $A(l,V)$, the set of $l$-dimensional planes in $V$, can be viewed as a subset of $A(l,n)$. Moreover, $A(l,V)$ is an $(l+1)(m-l)$-geodesic in $A(l,n)$. We give a heuristic proof here (rigorous proof is in Appendix Lemma \ref{lemm1}, Lemma \ref{lemmm1}). Let $L_0,L_1\in A(l,V)$ and $\ga(t) (t\in[0,1])$ be a geodesic in $A(l,n)$ connecting $L_0, L_1$. Intuitively, the shortest path from $L_0$ to $L_1$ should lie in $\textup{span}(L_0,L_1)\subset V$. This means that $\ga\subset A(l,V)$, so $A(l,V)$ is totally geodesic.

The main task of the paper is to study Question \ref{Q1} for $\cM=A(l,n)$, $k=(l+1)(m-l)$ with $l\le m\le n$, $\cV$ restricted to the subset of $\{A(l,V): V\in A(m,n)\}$. It will be convenient for us to notice that $\{A(l,V): V\in A(m,n)\}$ is naturally parametrized by $A(m,n)$. 

\begin{theorem}[Main theorem]\label{mainthm}
    Fix integers $0\le l\le m\le d< n$ and $\beta\in [0,m+1]$. 
    Let $\cV\subset A(m,n)$ be a Borel set with $\dim(\cV)=(m+1)(d-m)+\beta$. Then, the set $\bigcup_{V\in\cV}A(l,V)$, as a subset of $A(l,n)$, has Hausdorff dimension
    \begin{equation}
        \dim(\bigcup_{V\in\cV}A(l,V))\ge (l+1)(d-l)+\min\{l+1,\beta\}.
    \end{equation}
\end{theorem}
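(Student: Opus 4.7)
The plan is to adapt the strategy used to prove Theorem \ref{unionofkplane} (the $l=0$ case) in \cite{gan2023hausdorff} to general $l$. As usual, I would reduce the Hausdorff-dimension lower bound to a $\delta$-discretized counting estimate: for every $\varepsilon > 0$ and all sufficiently small $\delta > 0$, show that the $\delta$-covering number of the $\delta$-neighborhood of $\bigcup_{V \in \cV} A(l,V)$ in $A(l,n)$ is at least $\delta^{-(l+1)(d-l) - \min\{l+1,\beta\} + O(\varepsilon)}$. Frostman's lemma produces a Frostman measure on $\cV$ of exponent $(m+1)(d-m) + \beta - \varepsilon$, and for each $V \in \cV$ a $\delta$-separated net in $A(l,V)$ has cardinality $\sim \delta^{-(l+1)(m-l)}$. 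The problem thus becomes an incidence-counting estimate for the resulting family of $l$-plane $\delta$-tubes in $A(l,n)$.

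Next I would run a broad-narrow dichotomy in the spirit of \cite{zahl2022unions, gan2023hausdorff}. At a generic $L \in A(l,n)$, the $\delta$-tubes passing through $L$ are indexed by a subset of the fiber $\{V \in \cV : V \supset L\}$, which lies inside the $(m-l)(n-m)$-dimensional sub-Grassmannian $\{V \in A(m,n) : V \supset L\}$. In the broad regime, many transverse $V_j \in \cV$ contain $L$ and one applies a Brascamp-Lieb / multilinear Kakeya inequality for the projections of $T_L A(l,n)$ onto quotients by the tangent spaces $T_L A(l,V_j)$ to bound the incidence multiplicity. In the narrow regime, the fibers concentrate near a lower-dimensional sub-Grassmannian; after rescaling, this reduces the problem to one of the same shape with strictly smaller parameters (e.g.\ smaller $n$ or $m$), and the inductive hypothesis closes the argument.

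Combining the two regimes via an induction on scale yields the desired covering-number bound, and the saturation $\min\{l+1,\beta\}$ emerges naturally from the geometry: at a generic $L$ the broad contribution can enlarge the image by at most $l+1$ transverse directions before the fiber $\{V \in \cV : V \supset L\}$ fills up the full $(m-l)(n-m)$-dimensional generic slice, after which any further $\beta$ must enrich the fibers rather than the image. The base case $m = l$ is immediate since then $A(l,V) = \{V\}$ and $\bigcup_{V \in \cV} A(l,V) = \cV$ has the required dimension (with $\beta \le m+1 = l+1$ forcing $\min\{l+1,\beta\} = \beta$).

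The main obstacle is the Brascamp-Lieb step in the broad case: identifying the correct BL datum attached to a transverse collection of totally geodesic submanifolds $A(l,V_j) \subset A(l,n)$ and verifying that it has finite BL constant with exponents matching the target $(l+1)(d-l) + \min\{l+1,\beta\}$ is the decisive structural input. A secondary challenge is arranging the narrow-case reduction so that the induction parameters strictly decrease at each step while the form of the theorem is preserved.
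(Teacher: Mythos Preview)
Your overall strategy aligns with the paper's: reduce via Frostman to a $\delta$-discretized $L^p$ estimate (the paper's Theorem~\ref{kakeyathm}), then run broad--narrow with a Brascamp--Lieb inequality in the broad case. You also correctly flag the BL datum as the main obstacle. The paper resolves it through the coordinate chart $\Aloc(l,n)\cong[-1,1]^{(n-l)(l+1)}$, $L\mapsto(L\cap\Pi_0,\dots,L\cap\Pi_l)$: in these coordinates each $\bV=\{L\in\Aloc(l,n):L\subset V\}$ becomes an affine $(m-l)(l+1)$-plane in $\R^{(n-l)(l+1)}$, so the broad estimate is literally Maldague's Euclidean Brascamp--Lieb applied to $(d-m+2)$-tuples of slab families, with transversality supplied by Lemma~\ref{transvers} and the exponent verification \eqref{boundBL} producing the admissible range of $p$ (and hence the $\min\{l+1,\beta\}$ saturation). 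Your heuristic about ``$l+1$ transverse directions filling the fiber'' is not how the numerology actually emerges.

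Where your outline has a gap is the narrow case. You propose that rescaling reduces to ``strictly smaller parameters (e.g.\ smaller $n$ or $m$)'' with base case $m=l$. The paper does no dimension reduction; the induction is purely on the \emph{scale} $\delta$, with $l,m,d,n,\beta$ fixed throughout. In the narrow regime the directions $\vec\bw(V)\in G(m-l,n-l)$ concentrate in a $K^{-1}$-ball, so the $\cN_\delta(\bV)$ all sit in parallel $K^{-1}$-slabs $\cN_{K^{-1}}(\bM)$; the linear map sending each such slab back to $\Aloc(l,n)$ converts $\cN_\delta(\bV)$ into $\cN_{\delta K}(\bV')$, preserves the spacing condition \eqref{condition}, and one invokes the inductive hypothesis at scale $\delta K$. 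The narrow loss $K^{(m-l)(d-m)(p-1)}$, coming from the count of $K^{-1}$-balls needed to cover $G(m-l,\Pi)$, matches exactly the target constant $\delta^{-(m-l)(d-m)/p'}$, which is what closes the induction. A global reduction of $n$ or $m$ is not available because the narrow $(d-l)$-plane $\Pi\in G(d-l,n-l)$ depends on the base point $L$, so there is no single lower-dimensional ambient space containing the entire configuration.
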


\begin{remark}
    { \rm The theorem is sharp. The sharpness is showed in Section \ref{sec2}. One easily sees that Theorem \ref{mainthm} covers Theorem \ref{unionofkplane} at the special case $l=0$. }
\end{remark}

\bigskip

\subsection{Projection problem for \texorpdfstring{$k$}{}-geodesics}

The projection theory attracts a lot of research interests recently. In this subsection, we want to suggest a way to formulate projection problem in homogeneous spaces, and show how a recent result of the author \cite{gan2023marstrand} is related to the projection problem in $A(l,n)$. We are not going to prove any other technical result about projection problem in this paper.

We want to mention one progress made in this area recently, which is called the restricted projection problem.

Let $\ga(\theta)=(\cos\theta,\sin\theta,1)$, $\theta\in[0,2\pi)$. Define the planes
\begin{equation}\label{defplane}
    V_\theta:=\ga(\theta)^\perp, 
\end{equation} 
and lines
\begin{equation}\label{defline}
    \ell_\theta:=\textup{span}\{\ga(\theta)\}. 
\end{equation} 

We let $\pi_\theta:\R^3\rightarrow V_\theta$ and $\rho_\theta:\R^3\rightarrow \ell_\theta$ be orthogonal projections.
The following results were proved.

\begin{theorem}
    Let $A$ be a Borel set in $\R^3$. Then
    \begin{enumerate}
        \item \textup{(Restricted projections to planes)}  \[\dim(\pi_\theta(A))=\min\{2,\dim(A)\}, \textup{~for~a.e.~} \theta\in[0,2\pi)\]

        \item \textup{(Restricted projections to lines)} \[\dim(\rho_\theta(A))=\min\{1,\dim(A)\}, \textup{~for~a.e.~} \theta\in[0,2\pi)\]
    \end{enumerate} 
\end{theorem}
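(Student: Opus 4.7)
The plan is to apply the standard Frostman measure / energy method. For any $s < \min\{k,\dim(A)\}$ (with $k=2$ in part (1), $k=1$ in part (2)), Frostman's lemma supplies a probability measure $\mu$ on $A$ with finite $s$-energy $I_s(\mu) = \iint |x-y|^{-s}\,d\mu(x)\,d\mu(y) < \infty$. It will suffice to show that for almost every $\theta \in [0,2\pi)$ the pushforward measure has finite $s$-energy, since then $\dim(\pi_\theta(A)) \geq s$ (resp.\ $\dim(\rho_\theta(A)) \geq s$), and letting $s \uparrow \min\{k,\dim(A)\}$ finishes the lower bound. The matching upper bound is trivial since the projections are Lipschitz and land in $\R^k$.

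Via Plancherel, the $s$-energy of a measure $\nu$ on $\R^k$ equals a constant multiple of $\int|\widehat\nu(\xi)|^2|\xi|^{s-k}\,d\xi$, so the goal reduces to proving
\[\int_0^{2\pi}\int_{\R^k}|\widehat{(\pi_\theta)_*\mu}(\xi)|^2\,|\xi|^{s-k}\,d\xi\,d\theta<\infty,\]
and the analogous bound for $\rho_\theta$. A direct computation gives $\widehat{(\rho_\theta)_*\mu}(r) = \widehat\mu(r\gamma(\theta)/|\gamma(\theta)|)$ and $\widehat{(\pi_\theta)_*\mu}(\xi) = \widehat\mu(\xi)$ for $\xi \in V_\theta$, so the question becomes one of averaged decay of $\widehat\mu$ along a non-degenerate one-parameter family of rays (part (2)) or planes (part (1)) in frequency space. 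The key non-degeneracy is that $\gamma,\gamma',\gamma''$ span $\R^3$ for every $\theta$, which for $\gamma(\theta)=(\cos\theta,\sin\theta,1)$ reduces to the easy identity $\det[\gamma,\gamma',\gamma'']\equiv 1$.

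For part (2), I would follow the K\"aenm\"aki--Orponen--Venieri strategy: combine stationary-phase estimates based on the cinematic curvature of $\gamma$ with a Kaufman-type multilinear incidence bound to prove
\[\int_0^{2\pi}|\widehat\mu(r\gamma(\theta)/|\gamma(\theta)|)|^2\,d\theta \lesssim_\varepsilon r^{-1+\varepsilon}I_s(\mu)\]
for $s < 1$ and $r$ large. Integrating this against $r^{s-1}\,dr$ (which comes from the radial energy measure with $k=1$) yields the required bound and completes the line case.

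The main obstacle is part (1), where classical $L^2$ / stationary phase methods fall short of the sharp exponent $s=2$. Here I would follow the recent breakthroughs of Pramanik--Yang--Zahl and Gan--Guo--Wang: invoke the $\ell^2$ decoupling inequality for the light cone (equivalently, for the non-degenerate curve $\gamma$), combined with a high-low frequency decomposition of $\mu$ and a Bourgain-type pigeonholing over dyadic scales, to control the $L^2$ angular average of $\widehat\mu$ along the plane family $\{V_\theta\}$. The hardest step will be orchestrating this multiscale analysis so that the gain from decoupling plus the Frostman exponent saturate exactly at $s=2$ rather than only at $s=2-\delta$; this precise matching is where the novelty of the recent works lies.
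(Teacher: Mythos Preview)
The paper does not prove this theorem; it is quoted as background, with part (1) attributed to \cite{gan2023restriction} and part (2) to \cite{pramanik2022furstenberg} and \cite{gan2024exceptional}. There is thus no in-paper proof to compare against, only the cited works.

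Your sketch nonetheless has a substantive gap, concentrated in part (2). The K\"aenm\"aki--Orponen--Venieri machinery you invoke does \emph{not} reach the sharp threshold $s=1$; $L^2$/stationary-phase arguments based on cinematic curvature only gave partial exponents for the line case, and the averaged decay estimate you write down,
\[
\int_0^{2\pi}\bigl|\widehat\mu\bigl(r\gamma(\theta)/|\gamma(\theta)|\bigr)\bigr|^2\,d\theta \lesssim_\varepsilon r^{-1+\varepsilon}I_s(\mu)\quad\text{for all }s<1,
\]
is precisely the missing input, not a known lemma one can quote. The proofs that actually settle part (2) proceed quite differently: \cite{pramanik2022furstenberg} runs a Furstenberg-set/incidence-geometric argument that bypasses Fourier energy entirely, and \cite{gan2024exceptional} uses decoupling-type input. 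You have the relative difficulties inverted --- you treat (2) as accessible by classical methods and (1) as the hard case, when in fact both parts resisted the energy method and required the same circle of new ideas --- and you have also swapped an attribution: Pramanik--Yang--Zahl address lines (part (2)), not planes.

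Your outline for part (1) --- cone decoupling, high-low decomposition, Bourgain-type pigeonholing --- is broadly in the spirit of \cite{gan2023restriction}, though as you yourself note, all the content lies in the step you label ``the hardest''; as written this is a pointer to the literature rather than a proof.
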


This problem was conjectured in \cite[Conjecture 1.6]{fassler2014restricted}. Later, \textit{(1)} was proved in \cite{gan2023restriction}, \textit{(2)} was proved in \cite{pramanik2022furstenberg} and \cite{gan2024exceptional}. After that, a more general result was derived for sets in $\R^n$ in \cite{gan2022restricted}.

We would also like to consider the projection problems in homogeneous spaces. 

\begin{definition}
    Let $\cM$ be a Riemannian manifold and $\cN$ be a submanifold. We define the map $\pi_{\cN}: \cM\rightarrow \cN$ as follows: for $x\in \cM$, then $y=\pi_\cN(x)$ is a point in $\cN$ such that
    \[ d(x,\cN)=d(x,y). \] 
    Here, $d(\cdot,\cdot)$ is the distance function induced by the Riemannian metric.
\end{definition}
$\pi_\cN(x)$ may not be uniquely defined, but when $x$ is close to $\cN$, $\pi_\cN(x)$ is uniquely defined. This will not cause any trouble, as we always consider the local problem in which $x$ and $\cN$ are close to each other.
\begin{example}
    Let $\cM=S^n(\subset \R^{n+1})$ and $\cN=S^{n-1}=S^n\cap(\R^{n}\times \{0\})$. If $x$ is the north pole of $S^n$, then for any $y\in S^{n-1}$ we have $d(x,S^{n-1})=d(x,y)$, so $\pi_\cN(x)$ can be any point in $\cN$.
\end{example}

% \begin{example}
% Fix $l\le m\le n$.
%     Let $\cM=A(l,n)$ and $\cN=A(l,V)$ where $V\in A(m,n)$. More precisely, $\cN$ is the set of $l$-planes that are contained in $V$.
% Given $L\in\cM$, we define $L_0:=P_V(L)$ which is the orthogonal projection of $L$ onto $V$. We remark that $L_0$ may have dimension less than $l$. Then, for any $L'\in \cN$ such that $L_0\subset L'$, we have
% \[ d(L, \cN)=d (L,L'). \]
% Here, $d(\cdot,\cdot)$ is the distance function on $\cN$.    
% In particular, if $\dim(L_0)=l$, then there is a unique $L'\in \cN$ such that
%     \[ d(L,\cN)=d(L,L'). \]

% \end{example}

The projection problem in homogeneous spaces can be formulated in the following way.
\begin{question}[Restricted Projection]\label{Q4}
    Let $\cM$ be a homogeneous space. Fix an integer $1\le k<\dim(\cM)$. Let $ \Theta \subset \cA_k(\cM)$, which is called the restricted set of directions. (Usually, $\Theta$ is chosen to be a submanifold of $\cA_k(\cM)$.)
    Let $\bA\subset \cM$. Find the optimal number $S=S_{\bA,\Theta}$ so that the following Marstrand-type estimate holds.
    \begin{equation}
        \dim(\pi_\cN(\bA))\ge S_{\bA,\Theta}, \ \textup{~for~a.e.~}\cN\in \Theta. 
    \end{equation}
In most cases, we can even expect the lower bound $S=S_{\dim(\bA),\Theta}$ to depend on $\dim(\bA)$ and $ \Theta$. 
\end{question}

In a recent paper of the author (see \cite[Theorem 2]{gan2023marstrand}), a special case of Question \ref{Q4} was studied, in which $\cM=A(1,n),k=2(m-1)$ with $1<  m< n$ and $\Theta=\{A(l,\Pi): \Pi\in G(m,n)\}$. We explain more.

Note that $A(l,\Pi)$ is totally geodesic in $A(l,n)$. Consider the projection
\[ \pi_{A(l,\Pi)}: A(l,n)\rightarrow A(l,\Pi). \]
Actually, we have the following way to characterize this projection. For the proof, see Lemma \ref{lemmm2} in Appendix.
\begin{lemma}
Let $\Pi\in G(m,n)$, where $l\le m\le n$. Then for any $L\in A(l,n)$, $\pi_{\Pi}(L)\subset\pi_{A(l,\Pi)}(L)$. In particular, if $\pi_\Pi(L)$ is $l$-dimensional, then $\pi_{\Pi}(L)=\pi_{A(l,\Pi)}(L)$.
\end{lemma}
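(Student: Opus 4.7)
The plan is to reduce the lemma to a projection question in the Grassmannian $G(l+1,n+1)$, exploiting the identification of $A(l,n)$ as a generic open subset of $G(l+1,n+1)$ recalled earlier in the paper. Writing $L=x_L+V_L$ with $V_L\in G(l,n)$, the plane $L$ corresponds to the $(l+1)$-dimensional subspace $\wt L:=\operatorname{span}\bigl((x_L,1),\,V_L\times\{0\}\bigr)\in G(l+1,n+1)$, while $\Pi$ corresponds to $\wt\Pi:=(\Pi\times\{0\})+\R(0,\dots,0,1)\in G(m+1,n+1)$, and $A(l,\Pi)\subset A(l,n)$ corresponds to the sub-Grassmannian $G(l+1,\wt\Pi)$ (away from the "at infinity" locus). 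Because the Riemannian metric on $A(l,n)$ is the restriction of the $O(n+1)$-invariant metric on $G(l+1,n+1)$, the projection $\pi_{A(l,\Pi)}(L)$ is exactly the nearest-point projection of $\wt L$ onto $G(l+1,\wt\Pi)\subset G(l+1,n+1)$.

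The key input is then the following classical Grassmannian fact: for $U\in G(k,N)$ and a subspace $\wt\Pi$ of dimension $M$ with $k\le M\le N$, every nearest point $W\in G(k,\wt\Pi)$ to $U$ satisfies $W\supset \pi_{\wt\Pi}(U)$, with equality whenever $\pi_{\wt\Pi}|_U$ has full rank. This is transparent from the principal-angle picture of the metric: the cosines of the principal angles between $U$ and $W$ are the singular values of $P_UP_W$, and these are jointly maximized over $W\in G(k,\wt\Pi)$ precisely when $W$ contains the $\wt\Pi$-image $\pi_{\wt\Pi}(U)$; the remaining $k-\dim\pi_{\wt\Pi}(U)$ directions in $W$ (if any) then only contribute the mandatory principal angles of $\pi/2$ coming from $\ker\pi_{\wt\Pi}|_U\subset\wt\Pi^\perp$, and these are independent of the choice of $W$.

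Finally, one computes $\pi_{\wt\Pi}(\wt L)$ directly. Since $\wt\Pi^\perp=\Pi^\perp\times\{0\}$ in $\R^{n+1}$, decomposing $(x_L,1)=(x_L,0)+(0,1)$ and using $(0,1)\in\wt\Pi$ gives $\pi_{\wt\Pi}(x_L,1)=(\pi_\Pi(x_L),1)$, and likewise $\pi_{\wt\Pi}(v,0)=(\pi_\Pi(v),0)$ for $v\in V_L$. Hence $\pi_{\wt\Pi}(\wt L)=\operatorname{span}\bigl((\pi_\Pi(x_L),1),\,\pi_\Pi(V_L)\times\{0\}\bigr)$, which under the affine-Grassmannian correspondence translates to the affine subspace $\pi_\Pi(x_L)+\pi_\Pi(V_L)=\pi_\Pi(L)$ inside $\Pi$. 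Combining with the Grassmannian fact above gives $\pi_{A(l,\Pi)}(L)\supset\pi_\Pi(L)$, and in the full-rank case $\dim\pi_\Pi(L)=l$ the containment becomes an equality of $l$-planes, as asserted.

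The step I expect to be the main obstacle is making the Grassmannian projection fact fully rigorous: the principal-angle picture makes the statement plainly believable, but verifying optimality of $W\supset\pi_{\wt\Pi}(U)$ requires either a careful variational computation in Grassmannian coordinates or an explicit CS-decomposition argument. A minor secondary check, expected to be routine, is that the nearest point on $G(l+1,\wt\Pi)$ does actually land in the open subset representing $A(l,\Pi)$; this is automatic when $L$ is close enough to $A(l,\Pi)$, the only regime in which $\pi_{A(l,\Pi)}(L)$ is unambiguously defined in the first place.
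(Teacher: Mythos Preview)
Your proposal is correct and follows essentially the same route as the paper: the paper likewise passes through the embedding $A(l,n)\hookrightarrow G(l+1,n+1)$ and deduces the affine statement as a reinterpretation of the Grassmannian projection fact that any nearest $W\in G(k,\wt\Pi)$ to $U$ contains $\pi_{\wt\Pi}(U)$, which it proves via the recursive principal-angle description (showing $w_i\parallel\pi_{\Pi}(v_i)$ at each step). The step you flag as the main obstacle is precisely what the paper's appendix lemma supplies.
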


We can reinterpret Theorem 2 from \cite{gan2023marstrand} as follows:

\begin{theorem}
Fix integers $1< m<n$. Let $\bA\subset A(1,n)$. Then
\[ \dim(\pi_{A(1,\Pi)}(\bA))\ge S(a), \ \textup{~for~a.e.~}\Pi\in G(m,n).  \]
Here, $S(a)$ is defined as follows. For $j=0$ or $l$, 
    \begin{align}
    &S(a)=\min\{a,m-1\},\ \ \ a\in [0,n-1].\\
    &S(a)=\min\{a-(n-m),2(m-1)\}, \ \ \ a\in [n-1,2(n-1)].
    \end{align}
\end{theorem}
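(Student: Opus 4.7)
The plan is to exhibit the claim as a direct reinterpretation of Theorem 2 in \cite{gan2023marstrand}. The proof has two ingredients: first, identifying the homogeneous-space projection $\pi_{A(1,\Pi)}$ with the Euclidean projection $\pi_\Pi$ applied to lines, and second, translating this identification into the parameterization in which the cited theorem is phrased.

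First I would apply the lemma immediately preceding the theorem. Writing $L = b + \R\theta$ with $\theta\in S^{n-1}$ and $b\in\theta^\perp$, the Euclidean projection $\pi_\Pi(L)$ is a genuine affine line in $\Pi$ whenever $\pi_\Pi(\theta)\ne 0$, and in that case the lemma gives $\pi_{A(1,\Pi)}(L) = \pi_\Pi(L)$. The exceptional locus $\{L\in A(1,n) : \theta\in \Pi^\perp\}$ has positive codimension in $A(1,n)$; for a.e.\ $\Pi\in G(m,n)$, its contribution to $\dim(\pi_{A(1,\Pi)}(\bA))$ is dominated by that of the complement, so discarding it does not affect the final bound.

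Next I would parameterize $A(1,n)\cong\{(\theta,b) : \theta\in S^{n-1}/\{\pm 1\},\; b\in\theta^\perp\}$, of total dimension $2(n-1)$, and choose a compatible local parameterization on $A(1,\Pi)$ by direction-base-point pairs. Under these choices, the map $L\mapsto \pi_\Pi(L)$ becomes, up to smooth reparametrization, the coupled Euclidean projection $(\theta,b)\mapsto(\pi_\Pi\theta,\pi_\Pi b)$. The threshold $a = n-1$ in the definition of $S(a)$ is the dimension of the direction factor $S^{n-1}/\{\pm 1\}$: for $a\le n-1$, only the direction projection is effectively active and one recovers the classical Marstrand bound $\min\{a, m-1\}$; for $a > n-1$, the base-point factor must also contribute, and the coupled projection yields $\min\{a-(n-m), 2(m-1)\}$.

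With this identification in place, the dimension lower bound $S(a)$ for a.e.\ $\Pi\in G(m,n)$ is exactly the content of Theorem 2 of \cite{gan2023marstrand}, which is proved there by Fourier-analytic and high-low-frequency methods on the affine Grassmannian. The main obstacle is the second step: making precise that the parameterization of $A(1,n)$ used here and the parameter space used in \cite{gan2023marstrand} are locally bi-Lipschitz compatible, so that Hausdorff dimensions of $\bA$ and $\pi_\Pi(\bA)$ transfer faithfully between the two descriptions. Away from the degeneracy locus $\theta\in\Pi^\perp$, this is a smooth change of coordinates; near the locus it is handled by a standard truncation and exhaustion argument.
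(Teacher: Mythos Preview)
Your proposal is correct and matches the paper's approach exactly: the paper does not give a separate proof of this theorem at all, but simply presents it as a reinterpretation of Theorem 2 in \cite{gan2023marstrand} via the preceding lemma identifying $\pi_{A(1,\Pi)}$ with the Euclidean projection $\pi_\Pi$ on lines. Your additional care about the bi-Lipschitz compatibility of parameterizations and the handling of the degeneracy locus $\theta\in\Pi^\perp$ is more than the paper itself spells out, but it is consistent with what the reinterpretation requires.
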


\begin{remark}
    {\rm This theorem is sharp. Yet, the author do not know how to prove a sharp result for $A(l,n)$ with general $l$.
    }
\end{remark}

\subsection{Key ideas in the proof}

We roughly follow the steps as in \cite{gan2023hausdorff}, where the key tools are Brascamp-Lieb inequality and broad-narrow method. The broad-narrow method, also called Bourgain-Guth method, was developed by Bourgain and Guth in \cite{bourgain2010bounds} to study restriction type problems in harmonic analysis. After that, broad-narrow method became a key tool in harmonic analysis. The originality of this paper is that we will apply Brascamp-Lieb inequality and conduct broad-narrow argument in $A(l,n)$ rather than $\R^n$. 
The author believe for any homogeneous space, it is possible to find some sort of Brascamp-Lieb inequality and broad-narrow argument on it, which may have potential applications to other problems.

\subsection{Structure of the paper}

In Section \ref{sec2}, we construct sharp examples. In Section \ref{sec3}, we introduce the geometry of $A(l,n)$ that we need. In Section \ref{sec4}, we prove Theorem \ref{kakeyathm}. In Section \ref{sec5}, we show that Theorem \ref{kakeyathm} implies Theorem \ref{mainthm}.

\section{Sharp examples}\label{sec2}

We show Theorem \ref{mainthm} is sharp, i.e., we will find $\cV\subset A(m,n)$ with $\dim(\cV)=(m+1)(d-m)+\beta$, so that
\[ \dim(\bigcup_{V\in\cV}A(l,V))\le (l+1)(d-l)+\min\{l+1,\beta\}. \]
Our examples $V\in\cV$ will lie in $\R^{d+1}(\subset \R^n)$. 

\begin{itemize}
    \item When $\beta\in[l+1,m+1]$, let $\cV$ be a $((m+1)(d-m)+\beta)$-dimensional subset of $A(m,\R^{d+1})$. This is allowable since $\dim(A(m,d+1))=(m+1)(d+1-m)\ge (m+1)(d-m)+\beta$. Since every $V\in \cV$ is contained in $\R^{d+1}$, we have $\dim(\bigcup_{V\in\cV}A(l,V))\le \dim(A(l,\R^{d+1}))=(l+1)(d+1-l)$.

    \item When $\beta\in [0,l+1]$, we first talk about some geometry. Let $\Pi_0:=\R^{n-l}\times \{0\}^l$. Let $\{\be_i\}_{i=1}^n$ be the standard basis in $\R^n$.
    And let
    \[ \Pi_j:=\Pi_0+\vec\be_{n-l+j}, \text{ for } j=1,\dots,l. \]
    We see that $\Pi_0,\Pi_1,\dots,\Pi_j$ are parallel $(n-l)$-planes. 
    
    We also note that any $(l+1)$-tuple of points $(x_0,x_1,\dots,x_{l})$ with $x_j\in \Pi_j$ corresponds an $l$-plane $L$ that contains these $x_j$. We denote this $l$-plane by $L(x_0,x_1,\dots,x_{l})$. If $V_m$ is an $m$-plane and $V_m$ intersects each $\Pi_j$ transversely, then $V_m\cap \Pi_j$ are parallel $(m-l)$-planes for all the $j$; conversely, if $(v_0,v_1,\dots,v_{l})$ is an $(l+1)$-tuple where each $v_j$ is an $(m-l)$-plane in $\Pi_j$ and these $v_j$'s are parallel, then it corresponds to an $m$-plane $V_m=V_m(v_0,\dots,v_{l})$ so that $v_j= V_m\cap \Pi_j$ for all the $j$. The argument also holds when $m$ is replaced by $d$.

    Let $I_0:=\R^{n-d}\times \{0\}^{d-l}\times \{0\}^l$ and $J_0:=\{0\}^{n-d}\times \R^{d-l}\times \{0\}^l$. We see that $\Pi_0=I_0\oplus J_0$. For $j=1,\dots,l$, we also let 
    \[I_j:=I_0+\vec\be_{n-l+j},\ J_j:=J_0+\vec\be_{n-l+j}.\]

    If $(y_0,y_1,\dots,y_{l})$ is an $(l+1)$-tuple where each $y_j$ lies in $I_j$, then $y_j+J_0$ are parallel $(d-l)$-planes each of which lies in $\Pi_j$. Denote 
    $v_j=y_j+J_0$. Then, we obtain a $d$-plane $V_d=V_d(v_1,\dots,v_{l+1})$. Let $A_{\text{trans}}(m,V_d)$ be the set of $m$-planes that are contained in $V_d$ and are transverse to $\Pi_0$. In other words, $V_m\in A_{\text{trans}}(m,V_d)$ has the form
    \[ V_m(y_1+J',\dots,y_{l+1}+J'), \]
    where $J'\subset J_0$ is an $(m-l)$-plane. See Figure \ref{pic1}. We observe that for different $V_d$, $A_{\text{trans}}(m,V_d)$ are disjoint. We also observe that $\dim(A_{\text{trans}}(m,V_d))=(m+1)(d-m)$.

    We are ready to construct our set $\cV$. Choose a $\beta$-dimensional subset $Y\subset I_0\times I_2\times \dots\times I_{l}$. Let
    \[ \cV=\bigcup_{(y_0,y_2,\dots,y_{l})\in Y}  A_{\text{trans}}(m,V_d(y_0+J_0,y_1+J_0,\dots,y_{l}+J_0)). \]
    We have 
    \[ \dim(\cV)=\dim(Y)+\dim(A_{\text{trans}}(m,V_d))=(m+1)(d-m)+\beta. \]

    Note also that \[\bigcup_{V\in\cV}A(l,V)\subset \bigcup_{(y_0,y_1,\dots,y_{l})\in Y}  A_{\text{trans}}(l,V_d(y_0+J_0,y_1+J_0,\dots,y_{l}+J_0)). \]
    We have
    \[\dim(\bigcup_{V\in\cV}A(l,V))\le \dim(Y)+\dim(A(l,d))=(l+1)(d-l)+\beta.\]
    
\end{itemize}

\begin{figure}[ht]
\centering
\begin{minipage}[b]{0.85\linewidth}
\includegraphics[width=10cm]{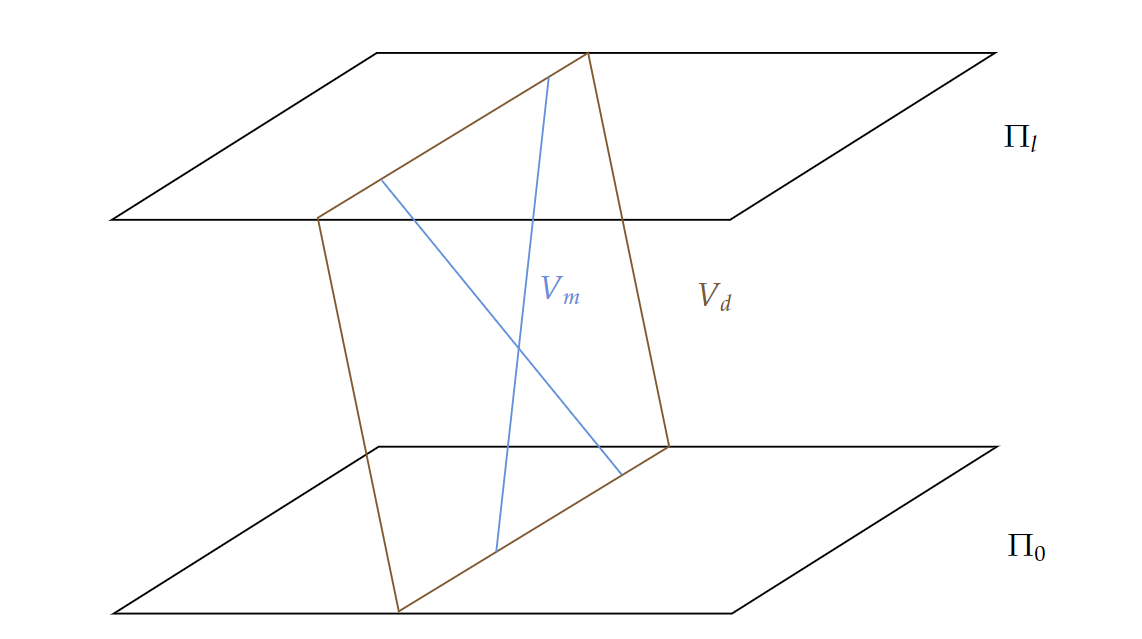}
\caption{}
\label{pic1}
\end{minipage}
\end{figure}

\medskip

\section{Preliminary}\label{sec3}

We introduce some notation that is needed for the later discussions. 

For a set $X$, we use $|X|$ to denote its measure, depending on the ambient space where $X$ lie. Here $X$ can be a subset in $\R^n$, $A(l,n)$ or $G(l,n)$. For any finite set $S$, we use $\#S$ to denote the number of elements in $S$.

Let $G(k,n)$ be the set of $k$-dimensional subspaces in $\R^n$. For every $k$-plane $V$, we can uniquely write it as
\[ V=\dir(V)+x_V, \]
where $\dir(V)\in G(k,n)$ and $x_V\in V^\perp$. $\dir(V)$ refers to the direction of $V$, as can be seen that $\dir(V)=\dir(V')\Leftrightarrow V\parallel V'$.

$A(k,n)$ is the set of $k$-planes $\R^n$. Since we can naturally view $A(k,n)$ as a subset of $G(k+1,n+1)$, the metric on $G(k+1,n+1)$ induces a metric on $A(k,n)$ for which we denote by $d(\cdot,\cdot)$. Actually, there are many equivalent metrics on $A(k,n)$ if we restrict to a local set in $A(k,n)$.

For $V_1, V_2\in A(k,n)$ with $x_{V_1},x_{V_2}\in B^n(0,1/2)$, we define $\rho(V_1,V_2)$ to be the smallest number $\rho$ such that $B^n(0,1)\cap V_1\subset N_{\rho}(V_2)$. We have the comparability of $d(\cdot,\cdot)$ and $\rho(\cdot,\cdot)$.

\begin{lemma}\label{comparablelem}
    There exists a constant $C>0$ (depending on $k,n$) such that for $V,V'\in A(k,n)$ with $x_{V},x_{V'}\in B^n(0,1/2)$,
    \[ C^{-1} d(V,V')\le \rho(V,V')\le C d(V,V').  \]
\end{lemma}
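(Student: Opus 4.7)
The plan is to compare both quantities, in a local coordinate chart around any fixed reference plane $V_0\in A(k,n)$ with $x_{V_0}\in \bar B^n(0,1/2)$, to a common linearized quantity, and then globalize by compactness and the $O(n)$-equivariance shared by $d$ and $\rho$.

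Using the $O(n)$-action on $\R^n$ I may assume $\dir(V_0)=\R^k\times\{0\}^{n-k}$. Any $V$ sufficiently close to $V_0$ can then be uniquely written as $V=\{(u,Au+b):u\in\R^k\}$ with $A:\R^k\to\R^{n-k}$ a small linear map (so $\dir(V)$ is the graph of $A$) and $b\in\R^{n-k}$ a translation parameter; let $b_0$ correspond to $V_0$. I claim both $d(V,V_0)$ and $\rho(V,V_0)$ are comparable to $\|A\|_{\op{op}}+|b-b_0|$ with constants depending only on $k,n$. For $d$, this follows because the map $V\mapsto \wt V:=\op{span}\{(y,1):y\in V\}\subset\R^{n+1}$ is a smooth embedding $A(k,n)\hookrightarrow G(k+1,n+1)$ with nondegenerate differential at $V_0$, so the pullback of the Grassmannian metric to the chart $(A,b)$ is bilipschitz to the Euclidean norm. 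For $\rho$, the upper bound is a direct computation: any $y=(u,Au+b)\in V\cap B^n(0,1)$ has orthogonal distance to $V_0=\R^k\times\{b_0\}$ equal to $|Au+b-b_0|\le\|A\|_{\op{op}}|u|+|b-b_0|\lesssim\|A\|_{\op{op}}+|b-b_0|$. The matching lower bound comes from testing at $y=x_V$ (yielding $|b-b_0|$) and $y=x_V+tu^\ast$ for a unit vector $u^\ast$ realizing $\|A\|_{\op{op}}$ and $t\sim 1/4$ (yielding $\|A\|_{\op{op}}$); both test points lie in $V\cap B^n(0,1)$ since $|x_V|\le 1/2$ ensures $V\cap B^n(0,1)$ contains a $k$-disk of radius $\ge 1/2$ about $x_V$.

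To globalize, $\cK:=\{V\in A(k,n):x_V\in\bar B^n(0,1/2)\}$ is compact (a bundle over $G(k,n)$ with compact fibers $P^\perp\cap\bar B^n(0,1/2)$), and by $O(n)$-equivariance the local comparability constants depend only on $k$ and $n$. Cover $\cK$ by finitely many such charts: for pairs $V,V'$ lying in a common chart the local bound applies, while for pairs that do not, both $d(V,V')$ and $\rho(V,V')$ are bounded below by a positive constant, so the two-sided inequality holds trivially for a sufficiently large $C$.

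The main technical point to check is the asymmetric definition of $\rho$ (only $V\cap B^n(0,1)\subset N_\rho(V')$ is required, not the reverse inclusion), which is what makes the lower bound $d\lesssim\rho$ the subtler direction. This is resolved by observing that $V\cap B^n(0,1)$ already contains a $k$-disk of radius $\ge 1/2$ in $V$; this disk affinely spans $V$, so $\rho$-proximity to $V'$ constrains $V'$ up to $O(\rho)$ in all affine parameters, and thus fully controls the coordinates $(A,b)$ above. Aside from this, the remaining obstacle is producing uniform constants across all reference planes, which is precisely where $O(n)$-equivariance and compactness of $\bar B^n(0,1/2)$ are used.
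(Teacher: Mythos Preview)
The paper states this lemma without proof, so there is no argument to compare against. Your approach---parametrize planes near a reference $V_0$ as graphs $(A,b)$, show both $d$ and $\rho$ are locally bilipschitz to $\|A\|_{\op{op}}+|b-b_0|$, then globalize by $O(n)$-equivariance and compactness of $\cK$---is the standard one and is essentially correct.

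One small point to tighten. Your explicit $\rho$-computation compares only $\rho(V,V_0)$ with the axis-aligned reference $V_0$, yet in the globalization step you invoke ``for pairs $V,V'$ lying in a common chart the local bound applies'' for arbitrary $V,V'$. Your $d$-argument (bilipschitz pullback of a Riemannian metric) does yield $d(V,V')\sim\|A-A'\|_{\op{op}}+|b-b'|$ for all pairs in the chart, but the $\rho$-argument as written does not. This is easy to patch in either of two ways: (i) use the $O(n)$-equivariance of both $d$ and $\rho$ to rotate $V'$ to be axis-aligned (a small rotation, since $V'$ is close to $V_0$), reducing to the case you computed; or (ii) redo the direct calculation with $V'=\{(u,A'u+b')\}$ in place of $V_0$, noting that for small $A'$ the orthogonal distance from $(u,Au+b)\in V$ to $V'$ equals $(1+O(\|A'\|))\,|(A-A')u+(b-b')|$, so the same test-point argument gives $\rho(V,V')\sim\|A-A'\|_{\op{op}}+|b-b'|$. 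With either fix the proof closes.
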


\begin{definition}
    For $V\in A(k,n)$ with $x_V\in B^n(0,1/2)$ and $0<r<1$, we define 
    \[V_r:=N_r(V)\cap B^n(0,1).  \]
\end{definition}
We see that $V_r$ is morally a slab of dimensions $\underbrace{r\times \dots\times r}_{n-k \textup{~times}}\times \underbrace{1\times \dots\times 1}_{k \tx{~times}}$. We usually call $V_r$ a $k$-dimensional $r$-slab. When $k$ is already clear, we simply call $V_r$ an $r$-slab.
If $W$ is a convex set such that $C^{-1} W\subset V_r\subset C W$, then we also call $W$ an $r$-slab. Here, the constant $C$ is a large number. For example, $C=100n$ will suffice.

\begin{definition}
    For two $r$-slab $V_r$ and $V'_r$. We say they are comparable if $C^{-1}V_r\subset V'_r\subset C V_r$.
\end{definition}

In this paper, we will also consider the balls and $\de$-neighborhoods in $A(k,n)$. Recall that we use $B_r(x)$ to denote the ball in $\R^n$ of radius $r$ centered at $x$. To distinguish the ambient space, we will use letter $Q$ to denote the balls in Grassmannians. For $V\in A(k,n)$, we use $Q_r(V)$ to denote the ball in $A(k,n)$ of radius $r$ centered at $V$. More precisely,
\[ Q_r(V):=\{V'\in A(k,n): d(V,V')\le r  \}. \]

For $X\subset \R^n$, we use $N_r(X)$ to denote the $r$-neighborhood of $X$ in $\R^n$.
For a subset $X\subset A(k,n)$, we use the fancy letter $\cN$ to denote the neighborhood in $A(k,n)$:
\[ \cN_r(X):=\{V\in A(k,n): d(V,X)\le r\}. \]
Here, $d(V,X)=\inf_{V'\in X}d(V,V')$.

\bigskip
We introduce the notation that is needed in the proof of the main theorem.
Let $\Pi_0=\R^{n-l}\times\{0\}^l$. Let $\Pi_j=\Pi_0+\vec\be_{n-l+j}$ for $j=1,\dots,l$. 
Define
\[ \Atr(l,n):=\{L\in A(l,n): L\textup{~is~transverse~to~} \Pi_0 \}. \]
We can identify  $\Atr(l,n)$ with $(\R^{n-l})^{l+1}$ through the map
\[ \Atr(l,n)\longrightarrow (\R^{n-l})^{l+1} \]
\[ L\mapsto (\Pi_0\cap L,\dots,\Pi_l\cap L). \]
Since we just need to work locally, we introduce the following local version.
\begin{definition}\label{def3.4}
For $L\in \Atr(l,n)$, let $x_j(L):=L\cap \Pi_j$  $(j=0,\dots,l)$.
Define
    \[ \Aloc(l,n):=\{L\in \Atr(l,n): x_j(L)\in [-1,1]^{n-l} \text{ for }j=0,\dots,l \}. \]
\end{definition}
From the definition, we can identify $\Aloc(l,n)$ with $[-1,1]^{(n-l)(l+1)}$.

\medskip

For $l<m\le n$, we can interpret an $m$-plane using the coordinate of $\Aloc(l,n)$.
We are only interested in those $m$-planes that interact with the $l$-planes in $\Aloc(l,n)$. More specifically, we give the following definition
\begin{definition}\label{def3.5}
For $V\in A(m,n)$ that is transverse to $\Pi_0$, we denote $v_j(V):=V\cap \Pi_j$ $(j=0,\dots,l)$ which are parallel $(m-l)$-planes.
Define
    \[\Aloc(m,n):=\{ V\in A(m,n): V\textup{~is~transverse~to~}\Pi_0,  v_j(V)\cap [-1,1]^{n-l}\neq \emptyset   \textup{~for~all~}j\}.\]
\end{definition}

 Here is a very intuitive way to think about their geometry. Each $V\in\Aloc(m,n)$ can be viewed as an $(l+1)$-tuple $(v_0,\dots,v_{l})$, where each $v_j=V\cap \Pi_j$ is an $(m-l)$-plane in $\Pi_j$ and all these $v_j$'s are parallel. Suppose we also have $L\in\Aloc(l,n)$ which is represented as $(x_0,\dots,x_{l})$ under the coordinate we just talked about. Then the incidence relationship $L\subset V$ is equivalent to 
\[ x_j\in v_j, \text{ for }j=0,\dots,l. \]

Next, we talk about the $\de$-discretization. For $L_1,L_2\in [-1,1]^{(n-l)(l+1)}=\Aloc(l,n)$, We say $L_1$ and $L_2$ are $\de$-separated if $\|L_1-L_2\|\le \de$, where $\|\ \|$ denotes the Euclidean distance in $[-1,1]^{(n-l)(l+1)}$. Since we are considering locally, the Euclidean distance is comparable to the distance induced by the Riemannian metric on $A(l,n)$. For $L\in [-1,1]^{(n-l)(l+1)}=\Aloc(l,n)$, we use $Q_\de(L)$ to denote the $\de$-ball centered at $L$ in $\Aloc(l,n)$, or we can view it as a $\de$-ball in $[-1,1]^{(n-l)(l+1)}$. For $V\in \Aloc(m,n)$, we use the bold-symbol $\bV$ to denote the set $\{L\in \Aloc(l,n): L\subset V\}$. The key point here is that $\bV$ is a subset of $\Aloc(l,n)$. We use $\cN_\de(\bV)$ to denote the $\de$-neighborhood of $\bV$ in $\Aloc(l,n)$. Using the coordinate $[-1,1]^{(n-l)(l+1)}$, we can view $\cN_\de(\bV)$ as $R_{0}\times \dots,\times R_l$, where $R_j\subset [-1,1]^{n-l}(\subset \Pi_j)$ are parallel rectangles of dimensions $\underbrace{\de\times \dots\times \de}_{(n-m)\textup{~times}}\times\underbrace{1\times \dots\times 1}_{(m-l)\textup{~times}}$.

Here are some facts about the measure which is easy to check: $|Q_\de(L)|\sim \de^{\dim(A(l,n))}= \de^{(n-l)(l+1)}$ for $L\in \Aloc(l,n)$; $|\cN_\de(\bV)|\sim \de^{\dim(A(l,n))-\dim(A(l,m))}= \de^{(l+1)(n-m)}$ for $V\in \Aloc(m,n)$.

\bigskip
\section{An \texorpdfstring{$L^p$}{} estimate for sum of \texorpdfstring{$k$}{}-geodesics}\label{sec4}

We first state the theorem that we will prove in this section.

\begin{theorem}\label{kakeyathm}
Let $0<l\le m\le  d< n$ be integers and $\beta\in[0,l+1]$. Let $\cV=\{V\}$ be a subset of $\Aloc(m,n)$. Suppose that for all balls $Q_r\subset \Aloc(m,n)$ of radius $r\in [\de,1]$, we have
\begin{equation}\label{condition}
    \#\{ V\in\cV: V\in Q_r \}\le (r/\de)^{(m+1)(d-m)+\beta}. 
\end{equation} 
Then
\begin{equation}\label{kakeyaineq}
    \| \sum_{V\in\cV} 1_{\cN_\de(\bV)}\|_{L^p(\Aloc(l,n))}\le C_{p,\e} \de^{-\frac{(m-l)(d-m)}{p'}-\e}\big(\sum_{V\in\cV}|\cN_\de(\bV)|\big)^{1/p},
\end{equation}
for $p$ in the range 
\begin{equation}\label{rangeofp}
    1\le p\le \min\left\{ \frac{d-k+\beta+1}{d-k+\beta}, \frac{d+\beta}{d+\beta-(d-m+2)^{-1}},d-m+2\right\}.
\end{equation}
Note that the right hand side is a number strictly bigger than $1$.
\end{theorem}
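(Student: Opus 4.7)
I will transplant the broad–narrow scheme of \cite{gan2023hausdorff}, originally carried out in $\R^n$, to the affine Grassmannian $\Aloc(l,n)$, inducting on the scale $\delta$ with the base case $\delta\sim 1$ trivial. The enabling observation from Section~\ref{sec3} is a product structure: under the identification $\Aloc(l,n)\cong\Pi_0\times\cdots\times\Pi_l$, $L\mapsto(x_0(L),\ldots,x_l(L))$, one has $\cN_\delta(\bV)=R_0(V)\times\cdots\times R_l(V)$, where the $R_j(V)$'s are parallel $(m-l)$-dimensional $\delta$-slabs in $\Pi_j\cong\R^{n-l}$ all sharing a common direction $D(V)\in G(m-l,n-l)$, so that $1_{\cN_\delta(\bV)}(L)=\prod_j 1_{R_j(V)}(x_j(L))$.

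\textbf{Broad–narrow step.} Fix a large $K=K(\e)$, partition $\Aloc(l,n)$ into $K\delta$-balls $Q$, and for each $Q$ group $\cV_Q:=\{V\in\cV:\cN_\delta(\bV)\cap Q\ne\emptyset\}$ by direction at scale $K^{-1}$ into subcollections $\cV_{Q,\tau}$. Call $Q$ \emph{broad} if there exist $d-m+2$ direction caps $\tau_1,\ldots,\tau_{d-m+2}$ each carrying a large share of $\cV_Q$ and pairwise $K^{-1}$-transverse; otherwise call $Q$ \emph{narrow}. On a broad $Q$ the product structure factors the multilinear integral
\begin{equation*}
\int_Q \prod_{i=1}^{d-m+2}\Big(\sum_{V\in\cV_{Q,\tau_i}} 1_{\cN_\delta(\bV)}\Big)\,dL \;=\; \prod_{j=0}^l \int_{\pi_j Q}\prod_i\Big(\sum_{V\in\cV_{Q,\tau_i}} 1_{R_j(V)}(x_j)\Big)\,dx_j,
\end{equation*}
and each $\Pi_j$-factor is controlled by the multilinear Kakeya / Brascamp--Lieb estimate for $(m-l)$-dimensional $\delta$-slabs in $\R^{n-l}$ with $d-m+2$ transverse directions. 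Combining the $(l+1)$ factors, then invoking H\"older's inequality and the Frostman hypothesis \eqref{condition}, yields the $L^p$ bound for $p\le d-m+2$ on the broad part; the exponent $(m-l)(d-m)/p'$ in \eqref{kakeyaineq} arises from counting the maximum per-$\Pi_j$ overlap of the $R_j(V)$'s.

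\textbf{Narrow reduction and closing the induction.} On a narrow $Q$ the directions $D(V)$ concentrate in the $K^{-1}$-neighborhood of a proper sub-Grassmannian of $G(m-l,n-l)$; geometrically this forces, up to error $K\delta$, the family $\cV_Q$ to lie in a lower-dimensional subset of $\Aloc(m,n)$, effectively shrinking one of the parameters ($d$, $n$, or the residual $\beta$) by a unit. Apply \eqref{condition} at radius $r=K\delta$ to supply a Frostman hypothesis in the reduced setting and invoke the inductive hypothesis at the coarser scale $K\delta$. After $\log_K(1/\delta)$ iterations the accumulated $K^{O(1)}$ loss is absorbed into $\delta^{-\e}$. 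The three entries of \eqref{rangeofp} then emerge respectively as: the trivial endpoint $(d-k+\beta+1)/(d-k+\beta)$ from the degenerate situation, the balance $(d+\beta)/\bigl(d+\beta-(d-m+2)^{-1}\bigr)$ between the broad multilinear loss and the narrow inductive loss, and the Brascamp--Lieb endpoint $d-m+2$.

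\textbf{Main obstacle.} The delicate point is verifying that a single tuple $D(V^{(1)}),\ldots,D(V^{(d-m+2)})\in G(m-l,n-l)$ supplies a valid Brascamp--Lieb datum \emph{inside each} factor $\Pi_j\cong\R^{n-l}$ simultaneously, with uniform constant in $j$; this is the geometric content that lifts multilinear Kakeya from $\R^{n-l}$ to $\Aloc(l,n)$. A secondary bookkeeping difficulty is arranging the narrow-case reduction so that the Frostman hypothesis \eqref{condition} survives intact at the coarser scale and with the modified parameters, enabling the induction to close on the same form of the statement.
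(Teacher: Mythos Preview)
Your factorization of the broad multilinear integral is false. For a fixed direction cap $\tau_i$ the function
\[
\sum_{V\in\cV_{Q,\tau_i}} 1_{\cN_\delta(\bV)}(x_0,\dots,x_l)\;=\;\sum_{V\in\cV_{Q,\tau_i}}\prod_{j=0}^l 1_{R_j(V)}(x_j)
\]
does \emph{not} split as a product over $j$, because the sum over $V$ couples the factors (different $V$'s contribute different tuples $(R_0(V),\dots,R_l(V))$). Consequently the displayed identity in your broad step is wrong, and even the corresponding inequality would over-count: you would end up with $(\#\cV_{Q,\tau_i})^{l+1}$ in place of $\#\cV_{Q,\tau_i}$, which is far too lossy to recover \eqref{kakeyaineq}. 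The paper avoids this by \emph{not} factoring: it embeds $\Aloc(l,n)\hookrightarrow\R^N$ with $N=(n-l)(l+1)$, so that each $\cN_\delta(\bV)$ becomes a single $k$-dimensional $\delta$-slab $\wt V_\delta$ in $\R^N$ with $k=(m-l)(l+1)$, and then applies Maldague's Brascamp--Lieb inequality once in $\R^N$. The substantive geometric input is Lemma~\ref{transvers}: a transverse $(d-m+2)$-tuple of caps in $G(m-l,n-l)$ lifts, via the diagonal embedding $\vec\bw\mapsto\vec\bw^{\,l+1}$, to directions in $G(k,N)$ that no $((d-l)(l+1)+l)$-plane can simultaneously contain. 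This is proved by a pigeonhole over the $l+1$ coordinate blocks, and it is exactly what replaces your attempted factor-by-factor argument. The BL constant is then computed via the dimension counting in Lemma~\ref{easylem} and the two-case analysis for $\dim U$, which is where the three constraints in \eqref{rangeofp} actually enter.

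Your narrow reduction is also off. The paper does \emph{not} shrink any of the parameters $d,n,\beta$; it runs pure induction on scale. Narrowness means the relevant directions $\vec\bw(V)$ lie in $\cN_{C_1K^{-1}}(G(m-l,\Pi))$ for some $(d-l)$-plane $\Pi\subset\R^{n-l}$, and the only use made of this is the count $\#\{Q_{K^{-1}}\subset\cN_{C_1K^{-1}}(G(m-l,\Pi))\}\lesssim K^{(m-l)(d-m)}$, which after H\"older produces the factor $K^{(m-l)(d-m)(p-1)}$ matching the exponent in \eqref{kakeyaineq}. One then tiles $\Aloc(l,n)$ by $K^{-1}$-slabs $\cN_{K^{-1}}(\bM)$ parallel to the cap center, applies an anisotropic dilation $\cL$ sending each slab back to $\Aloc(l,n)$ and each $\cN_\delta(\bV)$ to $\cN_{K\delta}(\bV')$, verifies that the Frostman condition \eqref{condition} transfers verbatim at scale $K\delta$ (up to an $O(1)$ constant removed by Lemma~\ref{goodlem}), and invokes the induction hypothesis. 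The induction closes because the narrow term carries $K^{(m-l)(d-m)(p-1)}(\log K)^p\cdot(\delta K)^{-\e p}$, and one chooses $K=K(\e)$ so that $(\log K)^pK^{-\e p}\le 1/2$.
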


To prove Theorem \ref{kakeyathm}, we will use the broad-narrow method. To estimate the left hand side of \eqref{kakeyaineq}, we will decompose the integral into broad part and narrow part. The broad part will be handled by the Brascamp-Lieb inequality, and the narrow part will be handled by the induction. After Theorem \ref{kakeyathm} is proved, we will see that Theorem \ref{mainthm} is a corollary by H\"older's inequality.

\subsection{Broad-narrow reduction}

Choose $\L=\{L\}$ to be a maximal $\de$-separated subset of $\Aloc(l,n)$.
Then $\Aloc(l,n)$ is covered by finitely overlapping $\de$-balls $\{Q_\de(L): L\in\L\}$. 
We bound the left hand side of \eqref{kakeyaineq} as
\begin{equation}
    \begin{split}
        \| \sum_{V\in\cV}1_{\cN_\de(\bV)} \|_{L^p(\Aloc(l,n) )}^p&\le \sum_{L} \| \sum_{V\in\cV}1_{\cN_\de(\bV)} \|_{L^p(Q_\de(L))}^p \\ &\le\sum_{L} \#\{V\in\cV: \cN_{\de}(\bV)\cap Q_\de(L)\neq \emptyset  \}^p |Q_\de(L)|\\
        &\lesssim \sum_{L} \#\{V\in\cV: \cN_{\de}(\bV)\cap Q_\de(L)\neq \emptyset  \}^p \de^{-(l+1)(n-l)}.
    \end{split}
\end{equation}
In the last inequality, we use $|Q_\de(L)|\sim \de^{-\dim(A(l,n))}=\de^{-(l+1)(n-l)}$.

For $L\in\L$, let
\[ \cV(L):=\{V\in\cV: \cN_{\de}(\bV)\cap Q_\de(L)\neq \emptyset\}.\]
Then we have
\begin{equation}\label{recalling}
    \| \sum_{V\in\cV}1_{\cN_\de(\bV)} \|_{L^p(\Aloc(l,n) )}^p\lesssim\sum_{L} \#\cV(L)^p \de^{-(l+1)(n-l)}.
\end{equation}
Our goal is to find an upper bound of $\#\cV(L)$.

Since the $m$-planes in $\cV(L)$ contains $L$ up to uncertainty $\de$, we may just assume $L\subset V$ for $V\in \cV(L)$. This is harmless, since later we will pass to their $\de$-neighborhoods. Next, we discuss how to view $\cV(L)$ as a subset of $G(m-l,n-l)$.

We introduce a new notation.
\begin{definition}\label{defbw}
    For $V\in \Aloc(m,n)$, we define $\vec\bw (V)$ to be the $(m-l)$-dimensional plane in $G(m-l,n-l)$ that is parallel to $\Pi_0\cap V$.
\end{definition}

Choose $\D\subset G(m-l,n-l)$ to be a maximal $\de$-separated subset. For any $L\in \Aloc(l,n)$ and $V\in \Aloc(m,n)$ such that $L\subset V$, we see that $v_j(V)=x_j(L)+\vec\bw(V)$ ($j=0,\dots,j$) (recalling Definition \ref{def3.4}, \ref{def3.5}). Therefore, by restricting to $\Pi_0$, we can view $\cV(L)$ as a ``bush" of $(m-l)$-planes centered at $x_0(L)$. (See Figure \ref{pic2}.)
Since $\cV(L)$ is $\de$-separated, we can view $\cV(L)$ as a subset of $\D(\subset G(m-l,n-l))$. 

\begin{figure}[ht]
\centering
\begin{minipage}[b]{0.85\linewidth}
\includegraphics[width=9cm]{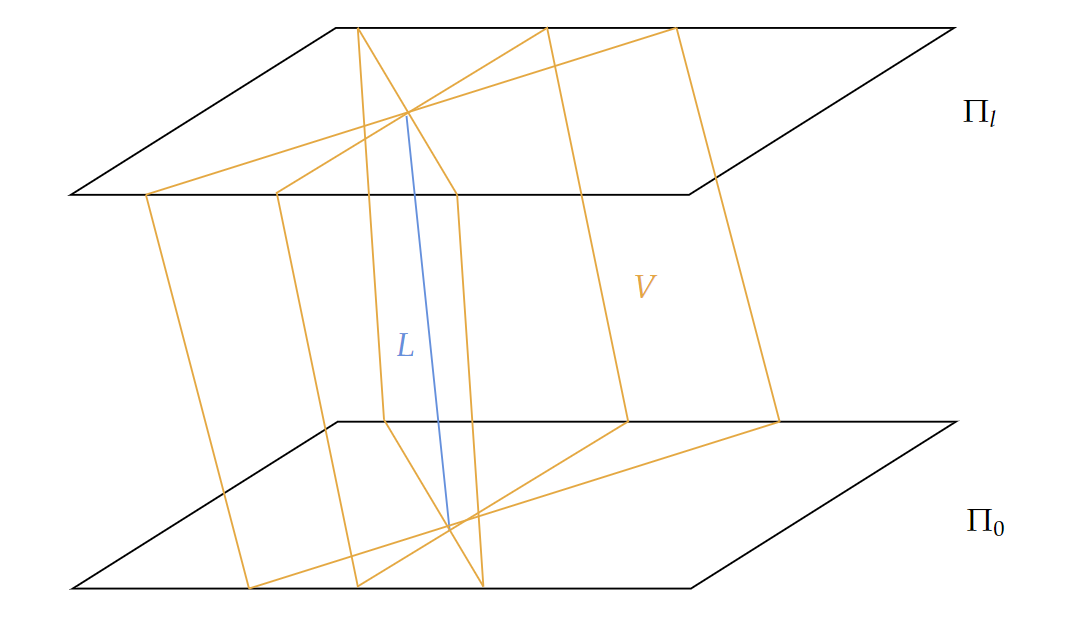}
\caption{}
\label{pic2}
\end{minipage}
\end{figure}

\bigskip

Now, we fix an $L\in \L$, and do broad-narrow argument for $\cV(L)$.
Fix a large number $K\gg1$ which is to be determined later. We cover $G(m-l,n-l)$ by $\cQ_{K^{-n}}=\{Q_{K^{-n}}\}$ which are balls of radius $K^{-(n-l)}$ with bounded overlaps: 
\[ G(m-l,n-l)=\bigcup Q_{K^{-n}}. \]
Note that $\#\cQ_{K^{-n}}\lesssim (K^{n-l})^{\dim G(m-l,n-l)}\le K^{n^3}$. We define the significant balls
\[ \cQ'(L)=\{ Q_{K^{-n}}: \#(Q_{K^{-n}}\cap \cV(L))\ge \frac{1}{K^{n^4}}\#\cV(L) \}. \]
We have 
\[ \#\cV(L)\le 2 \sum_{Q_{K^{-n}}\in \cQ'(L)}\#(Q_{K^{-n}}\cap \cV(L)). \]
Next by pigeonhole principle, there is a subset $\cQ(L)\subset \cQ'(L)$, such that the balls in $\cQ(L)$ are $100K^{-n}$-separated, $\#(Q_{K^{-n}}\cap \cV(L))$ are all comparable for $Q_{K^{-n}}\in\cQ$, and
\[ \#\cV(L)\lesssim \log K \sum_{Q_{K^{-n}}\in\cQ(L)}\#(Q_{K^{-n}}\cap \cV(L)). \]

Next, we determine whether $L$ is broad or  narrow. 

\begin{definition}
Fix an $L\in\L$, and let $\cQ(L)$ be given above.
    We say $L$ is \textbf{narrow} if there exists a $(d-l)$-plane $\Pi\in G(d-l,n-l)$ so that half of the balls in $\cQ(L)$ are contained in $\cN_{C_1K^{-1}}(G(m-l,\Pi))$. Here, we view $G(m-l,\Pi)$ as a subset of $G(m-l,n-l)$ and $\cN_{C_1K^{-1}}(G(m-l,\Pi))$ is the $C_1K^{-1}$-neighborhood of $G(m-l,\Pi)$ in $G(m-l,n-l)$. $C_1$ is a large constant to be determined later. We say $L$ is \textbf{broad} otherwise.
\end{definition}

The next lemma is the property for a broad $L$.

\begin{lemma}\label{broadpro}
    Suppose $L$ is broad. Then there exists 
\[\cU_{m-l},\dots,\cU_{d-l+1}\in \cQ(L),\]
so that there does not exist a $\Pi\in G(d-l,n-l)$ such that all of these $\cU_j$  intersect $\cN_{100K^{-(n-l)}}(G(m-l,\Pi))$. In other words, for any $\Pi\in G(d-l,n-l)$, there exists a $\cU_j$ that does not intersect $\cN_{100K^{-(n-l)}}(G(m-l,\Pi))$. We remark that the subscript of $\cU_j$ ranges from $m-l$ to $d-l+1$ but not from $1$ to $d-m+2$, just to make the discussion below more convenient.
\end{lemma}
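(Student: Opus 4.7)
I construct $\cU_{m-l},\ldots,\cU_{d-l+1}$ one-by-one via a greedy procedure driven by the broad hypothesis. The key invariant is that after each pick, the span (inside $\R^{n-l}$) of the centers grows by at least one dimension. Since we perform $d-m+2$ picks starting from dimension $m-l$, the total span exceeds $d-l$, which geometrically rules out any common $\Pi\in G(d-l,n-l)$ containing all of them up to the allowed $O(K^{-(n-l)})$ slack.

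For a ball $\cU\in\cQ(L)$, write $U\in G(m-l,n-l)$ for its center. Pick $\cU_{m-l}$ arbitrarily and set $W_{m-l}:=U_{m-l}$. Having chosen $\cU_{m-l},\ldots,\cU_{i-1}$ with span $W_{i-1}$, consider two cases. If $\dim W_{i-1}>d-l$, pick any previously unused $\cU_i\in\cQ(L)$. Otherwise extend $W_{i-1}$ to some $\Pi_{i-1}\in G(d-l,n-l)$; broadness guarantees that at least half of $\cQ(L)$ consists of balls $\cU$ with $\cU\not\subset\cN_{C_1K^{-1}}(G(m-l,\Pi_{i-1}))$, so (assuming $|\cQ(L)|$ is not absurdly small, in which case the statement is vacuous or follows trivially) we may choose $\cU_i$ from this collection, avoiding previous picks. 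Since the radius of $\cU_i$ is $K^{-n}\ll C_1K^{-1}$, its center $U_i$ satisfies $d(U_i,G(m-l,\Pi_{i-1}))\gtrsim C_1K^{-1}$; hence $U_i$ contributes a direction at distance $\gtrsim C_1K^{-1}$ from $\Pi_{i-1}\supset W_{i-1}$, giving $\dim W_i\ge\dim W_{i-1}+1$. Iterating, $\dim W_{d-l+1}\ge d-l+1$.

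To finish, suppose for contradiction that some $\Pi\in G(d-l,n-l)$ satisfies $\cU_j\cap\cN_{100K^{-(n-l)}}(G(m-l,\Pi))\neq\emptyset$ for every $j$. Then each center $U_j$ lies within Grassmann distance $200K^{-(n-l)}$ of some $U_j'\in G(m-l,\Pi)$, so $U_{m-l}'+\cdots+U_{d-l+1}'\subset\Pi$ has dimension $\le d-l$. On the other hand, the greedy construction recorded a quantitative transversality: at each step $i$, the summand $U_i$ added a direction at distance $\gtrsim C_1K^{-1}$ from $W_{i-1}$. A standard perturbation lemma on the Grassmannian (induction on $i$, tracking principal angles) shows that perturbing each $U_i$ by $\epsilon\ll C_1K^{-1}$ preserves this transversality up to constants, hence preserves the lower bound $\dim(U_{m-l}'+\cdots+U_{d-l+1}')\ge d-l+1$. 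Since $n-l\ge 2$ (because $l<m\le d<n$ forces $n\ge l+2$), we have $200K^{-(n-l)}\ll C_1K^{-1}$ provided the constant $C_1=C_1(n)$ is chosen large enough, yielding the contradiction. The main obstacle is precisely this perturbation lemma: making the Grassmann-distance analysis quantitative and uniform enough to propagate the transversality gap through all $d-m+2$ greedy steps; everything else (existence of the extension $\Pi_{i-1}$, the passage from ``not contained in $\cN_{C_1K^{-1}}$'' to ``center is $\gtrsim C_1K^{-1}$ far'', and the trivial small-$\cQ(L)$ bookkeeping) is routine.
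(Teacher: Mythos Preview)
Your greedy strategy is exactly the paper's: pick the balls one at a time, and at each step use broadness (applied to any $(d-l)$-plane containing the span built so far) to locate a ball whose center contributes a new direction at angular distance $\gtrsim C_1K^{-1}$ from that span. The two proofs diverge only in how the accumulated transversality is recorded and how the final contradiction is extracted.

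Where you invoke an unspecified ``standard perturbation lemma on the Grassmannian'' and flag it as the main obstacle, the paper sidesteps any Grassmannian perturbation analysis by passing to a single scalar invariant. From each chosen center $U_j$ it extracts one unit vector $v_j\in U_j\subset\R^{n-l}$ (an orthonormal basis $v_1,\dots,v_{m-l}$ of $U_{m-l}$ to start, then one $v_j\in U_j$ for each subsequent $j$) and tracks the parallelepiped volume $\textup{Vol}_j(v_1,\dots,v_j)$. The greedy step ``$v_j$ sits at distance $\ge\wt C K^{-1}$ from $\textup{span}(v_1,\dots,v_{j-1})$'' multiplies this volume by at least $\wt C K^{-1}$, so after $d-l+1$ vectors the volume is $\gtrsim(\wt C K^{-1})^{d-l+1}$. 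For the contradiction, if every $U_j$ met $\cN_{200K^{-(n-l)}}(G(m-l,\Pi))$, Lemma~\ref{comparablelem} forces every $v_j$ into a Euclidean slab of thickness $O(K^{-(n-l)})$ around the $(d-l)$-plane $\Pi\subset\R^{n-l}$; but the $(d-l+1)$-volume of any parallelepiped inside such a slab is $\lesssim K^{-(n-l)}$, which is incompatible with the lower bound once $\wt C$ (and then $C_1$) is chosen large. The volume is the ``right'' continuous quantity here: it absorbs all the error accumulation you were worried about in one multiplicative line, with no need to control how the intermediate spans $W_{i-1}$ move under perturbation.
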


\begin{proof}

We want to show that if $L$ is not narrow, then the property in Lemma \ref{broadpro} is satisfied. 
We let $\cG$ be the set of centers of the balls in $\cQ(L)$, i.e., 
\[ \cQ(L)=\{Q_{K^{-n}}(U):U\in\cG\}. \]
We just need to show that there exists $U_{m-l},\dots,U_{d-l+1}\in\cG$ so that there does not exists a $\Pi\in G(d-l,n-l)$ such that all these $U_j$ are contained in $\cN_{200K^{-(n-l)}}(G(m-l,\Pi))$.

If $U_{m-l},\dots,U_{d-l+1}\in \cN_{200K^{-(n-l)}}(G(m-l,\Pi))$, then by Lemma \ref{comparablelem}, 
\begin{equation}\label{nothappen}
    U_{m-l},\dots,U_{d-l+1}\subset N_{CK^{-n}}(\Pi)
\end{equation}
for some large constant $C$ depending on $k,n$. Here, \eqref{nothappen} happens in the ambient space $\R^{n-l}$.

To show the existence of $U_{m-l},\dots,U_{d-l+1}$ that do not satisfy \eqref{nothappen}, we will inductively find $U_{m-l},\dots,U_{d-l+1}$ and vectors $v_1,\dots,v_{d-l+1}$ in $\bigcup_{j=m-l}^{d-l+1}U_j$ such that $v_1\dots,v_{d-l+1}$ cannot all lie in $N_{CK^{-n}}(\Pi)$.

We observe the following fact: If the unit vectors $v_1,\dots,v_{d-l+1}$ lie in $CK^{-n}$-neighborhood of a $(d-l)$-dimensional plane $\Pi$, then the volume of the parallelepiped spanned by $\{v_1,\dots,v_{d-l+1}\}$ has the upper bound: 
\[\tx{Vol}_{d-l+1}(v_1,\dots,v_{d-l+1})\lesssim  K^{-n}.\] We give the proof. Let $\pi=\tx{span}\{v_1,\dots,v_{d-l+1}\}$, then the parallelepiped spanned by $\{v_1,\dots,v_{d-l+1}\}$ is contained in $N_{CK^{-n}}(\Pi)\cap \pi\cap B^{n-l}(0,1)$. We get 
\begin{equation}
    \tx{Vol}_{d-l+1}(v_1,\dots,v_{d-l+1})\le \tx{Vol}_{d-l+1}\bigg( N_{CK^{-n}}(\Pi)\cap B^{n-l}(0,1)\cap \pi \bigg).
\end{equation}
Noting that $N_{CK^{-n}}(\Pi)\cap B^{n-l}(0,1)$ is morally a rectangle of dimensions \[\underbrace{1\times \dots\times 1}_{d-l \tx{~times} }\times \underbrace{CK^{-n}\times \dots \times CK^{-n}}_{(n-d)\tx{~times} }.\]
Its intersection with a $(d-l+1)$-dimensional plane $\pi$ has volume $\lesssim K^{-n}$.

Therefore, we just need to find $U_{m-l},\dots,U_{d-l+1}\in\cG$ and vectors $v_1,\dots,v_{d-l+1}$ that lie in $\bigcup_{j=m-l}^{d-l+1} U_j$ such that
\[ \tx{Vol}_{d-l+1}(v_1,\dots,v_{d-l+1})\ge C' K^{-n}, \]
for some large constant $C'$. This will imply that $U_{m-l},\dots,U_{d-l+1}$ cannot all lie in $\cN_{200K^{-n}}(G(m-l,\Pi))$.

This is done in the following way. 
We first choose a $U_{m-l}\in\cG$ and choose $v_1,\dots,v_{m-l}$ to be the orthonormal basis of $U_{m-l}$.
Next, we will inductively construct $U_{j}$ for $m-l<j\le d-l+1$ and a unit vector $v_j\in U_j$ so that the $j$-dimensional parallelepided spanned by $v_1,\dots,v_j$ has volume bigger than $\wt C^jK^{-j}$ for some large constant $\wt C$. The base case is when $j=m-l$. By our choice, we have
\[ \textup{Vol}_k(v_1,\dots,v_{m-l})=1\gtrsim  \wt C^{m-l}K^{-(m-l)}. \]
Suppose we have constructed $U_{m-l},\dots,U_{j-1}$ and $v_1,\dots,v_{j-1}$ with 
\[\textup{Vol}_{j-1}(v_1,\dots,v_{j-1})\ge \wt C^{j-1}K^{-(j-1)}.\]
Denote the $(j-1)$-dimensional plane $\textup{span}\{v_1,\dots,v_{j-1}\}$ by $\pi_{j-1} (\subset \R^{n-l})$. For any $U\in\cG$, consider its relative position with $\pi_{j-1}$. It suffices to find a unit vector $v\in U$ that is not contained in the $\wt C K^{-1}$-neighborhood of $\pi_{j-1}$. The key observation is that: If every unit vector $v\in U$ is contained in the $\wt CK^{-1}$-neighborhood of $\pi_{j-1}$, then $U$, as an element in $G(m-l,n-l)$, is contained in $\cN_{C_1 K^{-1}}(G(m-l,\pi_{j-1}))$. (Here, $C_1$ is the constant in the definition of narrowness, which is determined after $\wt C$.) This contradicts the assumption that $L$ is not narrow.
\end{proof}

\bigskip

Now, we can estimate $\#\cV(L)$. Recall $\cQ_{K^{-n}}=\{Q_{K^{-n}}\}$ is a partition of $G(m-l,n-l)$ by $K^{-n}$-balls. Let $\cQ_{K^{-1}}=\{Q_{K^{-1}}\}$ be a partition of $G(m-l,n-l)$ by $K^{-1}$-balls.  We can also assume they are nested in the sense that each $Q_{K^{-n}}$ is contained in some $Q_{K^{-1}}$.

Recall that $\Aloc(l,n)=[-1,1]^{(n-l)(l+1)}=\big([-1,1]^{(n-l)}\big)^{(l+1)}$. For $V\in \Aloc(m,n)$, recall in Definition \ref{defbw} that $\vec\bw(V)\in G(m-l,n-l)$ is the $(m-l)$ dimensional subspace parallel to $V\cap \Pi_0$.

Define the following transverse $(d-m+2)$-tuples
\begin{align}
    \textup{Trans}^{d-m+2}:=\{ (\cU_{m-l},\dots,\cU_{d-l+1})\in (\cQ_{K^{-n}})^{d-m+2}:\cU_{m-l},\dots,\cU_{d-l+1} \textup{~do~not~simultaneously}\\
   \nonumber \textup{~interset~}\cN_{100K^{-n}}(G(m-l,\Pi)) \textup{~for~any~} \Pi\in G(d-l,n-l)  \}.
\end{align}
We have the following estimate for $\#\cV(L)$.

\begin{proposition}\label{bnest} 
For $L\in \L$, we have
\begin{align}
    \#\cV(L)^p\lesssim K^{(m-l)(d-m)(p-1)}(\log K)^p\sum_{Q_{K^{-1}}\in\cQ_{K^{-1}}}\#(Q_{K^{-1}}\cap\cV(L))^p\\
    \label{huge}+K^{O(1)} \sum_{(\cU_{m-l},\dots,\cU_{d-l+1})\in\textup{Trans}^{d-m+2}}\prod_{j=m-l}^{d-l+1}(\#\cU_j\cap\cV(L))^{\frac{p}{d-m+2}}.
\end{align}    
\end{proposition}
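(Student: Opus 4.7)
\medskip

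\noindent\textbf{Proof proposal.} The plan is to dichotomize according to whether $L$ is narrow or broad, show that the narrow alternative contributes the first term of \eqref{huge} and that the broad alternative contributes the second, and then simply take the maximum (equivalently, sum) of the two bounds. In both cases we exploit the ``flat'' decomposition
\[
\#\cV(L) \lesssim (\log K) \sum_{Q_{K^{-n}}\in\cQ(L)} \#\bigl(Q_{K^{-n}}\cap\cV(L)\bigr),
\]
together with the property that every ball of $\cQ(L)$ carries a comparable population, i.e.\ $\#(Q_{K^{-n}}\cap\cV(L))\ge K^{-n^4}\#\cV(L)$.

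\smallskip

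\noindent\emph{Narrow case.} Suppose $L$ is narrow, witnessed by a $(d-l)$-plane $\Pi\in G(d-l,n-l)$. Then at least half of the balls in $\cQ(L)$ lie in $\cN_{C_1K^{-1}}(G(m-l,\Pi))$, and since these have comparable populations we may restrict the sum above to narrow balls. The submanifold $G(m-l,\Pi)\subset G(m-l,n-l)$ has dimension $(m-l)(d-m)$, so its $C_1K^{-1}$-neighborhood is covered by at most $\lesssim K^{(m-l)(d-m)}$ balls from $\cQ_{K^{-1}}$; call this collection $\mathcal{S}$. Every narrow $K^{-n}$-ball is contained (up to the standard bounded-overlap) in some $Q_{K^{-1}}\in\mathcal{S}$, hence
\[
\#\cV(L) \lesssim (\log K) \sum_{Q_{K^{-1}}\in\mathcal{S}} \#\bigl(Q_{K^{-1}}\cap\cV(L)\bigr).
\]
Applying H\"older's inequality with $\#\mathcal{S}\lesssim K^{(m-l)(d-m)}$ yields the first term on the right of \eqref{huge}, with the factor $K^{(m-l)(d-m)(p-1)}$ coming out correctly.

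\smallskip

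\noindent\emph{Broad case.} Suppose $L$ is broad. Lemma \ref{broadpro} supplies a $(d-m+2)$-tuple $(\cU_{m-l},\dots,\cU_{d-l+1})$ of balls in $\cQ(L)$ which forms an element of $\textup{Trans}^{d-m+2}$ (passing from $\cQ(L)\subset \cQ_{K^{-n}}$ to the transverse tuple definition requires only a harmless constant enlargement). Because each $\cU_j\in\cQ(L)$, we have $\#(\cU_j\cap\cV(L))\ge K^{-n^4}\#\cV(L)$ for every $j$, so
\[
\#\cV(L)^p \;=\; \prod_{j=m-l}^{d-l+1} \#\cV(L)^{\frac{p}{d-m+2}} \;\le\; K^{O(1)} \prod_{j=m-l}^{d-l+1} \#\bigl(\cU_j\cap\cV(L)\bigr)^{\frac{p}{d-m+2}},
\]
which is bounded above by the full sum over $\textup{Trans}^{d-m+2}$, yielding the second term.

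\smallskip

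\noindent\emph{Main obstacle.} The routine routine parts are H\"older's inequality and pigeonholing on populations. The genuine content is the narrow reduction: one must verify that the scale $K^{-1}$ (coming from the definition of narrowness) is the correct scale at which to ``quotient out'' the narrow concentration, and that the covering number of the narrow neighborhood by $K^{-1}$-balls is exactly $K^{(m-l)(d-m)}$ --- this matches the $k$-plane dimension count and is what produces the sharp narrow exponent. The broad case, once Lemma \ref{broadpro} is granted, is essentially cost-free. Finally, taking the maximum of the two bounds (which is what the sum in \eqref{huge} records) completes the argument.
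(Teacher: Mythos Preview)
Your proposal is correct and follows essentially the same route as the paper: both proofs dichotomize on narrow/broad, use the dimension count $\dim G(m-l,\Pi)=(m-l)(d-m)$ together with H\"older to produce the narrow term, and in the broad case invoke Lemma~\ref{broadpro} together with the significance threshold $\#(\cU_j\cap\cV(L))\ge K^{-n^4}\#\cV(L)$ to bound $\#\cV(L)$ by a single transverse product, then by the full sum over $\textup{Trans}^{d-m+2}$. The only cosmetic difference is that the paper writes the broad bound at the level of $\#\cV(L)$ and then raises to the $p$-th power, whereas you go directly to $\#\cV(L)^p$; the content is identical.
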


\begin{proof}
If $L\in\L$ is narrow, then there exists $\Pi\in G(d-l,n-l)$ such that
\[ \#\cV(L)\lesssim \log K \sum_{Q_{K^{-1}}\subset \cN_{C_1K^{-1}}(G(m-l,\Pi))}\#(Q_{K^{-1}}\cap \cV(L)). \]
Since $G(m-l,\Pi)$ is a $(m-l)(d-m)$-dimensional submanifold of $G(m-l,n-m)$, we have 
\[\#\{Q_{K^{-1}}\subset \cN_{C_1K^{-1}}(G(m-l,\Pi))\}\lesssim K^{(m-l)(d-m)}. \]
Therefore, we have

\begin{equation}
    \#\cV(L)^p\lesssim K^{(m-l)(d-m)(p-1)} (\log K)^p \sum_{Q_{K^{-1}}\in \cQ_{K^{-1}}}\#(Q_{K^{-1}}\cap \cV(L))^p.
\end{equation}

We discuss the bound when $L$ is broad. In this case, we have
\begin{equation}
\begin{split}
     \#\cV(L)\lesssim K^{O(n^{10})} \max_{(\cU_{m-l},\dots,\cU_{d-l+1})\in\textup{Trans}^{d-m+2}}\prod_{j=m-l}^{d-l+1}(\#\cU_j\cap\cV(L))^{\frac{1}{d-m+2}}\\
     \le K^{O(n^{10})} \sum_{(\cU_{m-l},\dots,\cU_{d-l+1})\in\textup{Trans}^{d-m+2}}\prod_{j=m-l}^{d-l+1}(\#\cU_j\cap\cV(L))^{\frac{1}{d-m+2}}
\end{split}
    \end{equation}
Combining the broad case and narrow case, we obtain the estimate we want.
\end{proof}

\bigskip

Recalling \eqref{recalling}, we have
\[ \| \sum_{V\in\cV}1_{\cN_\de(\bV)} \|_{L^p(\Aloc(l,n) )}^p\lesssim\sum_{L} \#\cV(L)^p \de^{-(l+1)(n-l)}. \]
Using Proposition \ref{bnest} and noting that each $\cN_\de(\bV)$ is roughly constant on any ball of radius $\de$ in $\Aloc(l,n)$, we obtain

\begin{equation}\label{1term}
\begin{split}
&\|\sum_{V\in\cV}1_{\cN_\de(\bV)}\|_{L^p(\Aloc(l,n))}^p\\
&\lesssim K^{(m-l)(d-m)(p-1)}(\log K)^p\sum_{Q_{K^{-1}}\in\cQ_{K^{-1}}}\| \sum_{V\in\cV, \vec\bw(V)\in Q_{K^{-1}}}1_{\cN_\de(\bV)} \|_{L^p(\Aloc(l,n))}^p\\
    &+C_K\sum_{(\cU_{m-l},\dots,\cU_{d-l+1})\in\textup{Trans}^{d-m+2}}\int_{\Aloc(l,n)}\prod_{j=m-l}^{d-l+1}(\sum_{V\in\cV, \vec\bw(V)\in \cU_j}1_{\cN_\de(\bV)})^{\frac{p}{d-m+2}}\\
    &=:\textup{I}+\textup{II}.
\end{split}\end{equation}

\bigskip

Next, we will estimate I and II separately. We first look at II.

\subsection{Estimate of \textup{II}}

\subsubsection{Brascamp-Lieb inequality}\hfill

The version of Brascamp-Lieb inequality we are going to use is due to Maldague. See Theorem 2 in \cite{maldague2022regularized}.

\begin{theorem}[Maldague]\label{Domthm}
Consider in $\R^N$. Fix $1\le k\le N$, $J\in \N$.
    Let $W_j\subset G(N-k,N)$ for $j=1,\dots,J$. Fix $p\in[1,J]$. Define
    \begin{equation}\label{bl0}
        \BL(\{W_j\}_{j=1}^J,p):=\sup_{U\leq \R^N }(\dim U-\frac{p}{J}\sum_{j=1}^J\dim\pi_{W_j}(U)). 
    \end{equation} 
Here, $U\le \R^N$ means $U$ is a subspace of $\R^N$, and $\pi_{W_j}:\R^N\rightarrow W_j$ is the orthogonal projection. 

There exists $\nu>0$ depending on $\{W_j\}_{j=1}^J$, so that the following is true. For any $\cV_{j}=\{V_{j}\}\subset A(k,N)\ (j=1,\dots,J)$ being sets of $k$-planes, such that each $V_{j}\in \cV_{j}$ is orthogonal to some $W\in Q_\nu(W_j)$, we have
\begin{equation}\label{mblineq}
    \int \prod_{j=1}^J\bigg(\sum_{V_{j}\in\cV_{j}}1_{V_{j,\de}} \bigg)^{\frac{p}{J}}\lesssim \de^{N-\e}\de^{-\BL(\{W_j\}_{j=1}^J,p)}\prod_{j=1}^J\bigg(\#\cV_{j}\bigg)^{\frac{p}{J}}.
\end{equation}
\end{theorem}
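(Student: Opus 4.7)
The plan is to deduce \eqref{mblineq} from the classical Brascamp--Lieb inequality of Bennett--Carbery--Christ--Tao (BCCT) for Gaussians via a Gaussian extension, combined with a perturbative step to accommodate the $\nu$-neighborhood regularization in the slab directions. First, I would treat the rigid case in which each $V_j \in \cV_j$ is exactly orthogonal to $W_j$. Here $1_{V_{j,\de}}$ is the indicator of a $k$-dimensional $\de$-slab with fixed transverse direction $W_j$, pointwise dominated by the lifted Gaussian $g_{V_j}(x) = e^{-|\pi_{W_j}(x - x_{V_j})|^2/\de^2}$. Setting $G_j := \sum_{V_j \in \cV_j} g_{V_j}$ and applying BCCT to the tuple $(G_1,\ldots,G_J)$ with exponents $p/J$ gives the continuous estimate in the balanced case $\BL(\{W_j\},p)=0$; for general $p$ the defect is absorbed by parabolic rescaling of $\R^N$ by a factor $\de$ in the direction of any supremizing subspace $U$ of \eqref{bl0}, which costs exactly $\de^{-\BL(\{W_j\},p)}$. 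Undoing the scaling and using $\int g_{V_j} \sim |V_{j,\de}|$ to convert the Gaussian $L^1$-norm back to counting yields the rigid form of \eqref{mblineq}.

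Second, to upgrade to the regularized case I would use upper semi-continuity of the BL exponent $\{W'_j\} \mapsto \BL(\{W'_j\},p)$ on the product Grassmannian $G(N-k,N)^J$. For each $j$, partition $\cV_j$ by the exact direction $W'_j \in Q_\nu(W_j)$ of its elements at resolution $\nu$, and apply Step~1 to each of the finitely many direction-tuples. The key fact is that $W \mapsto \dim\pi_W(U)$ is upper semi-continuous in $W$ for each fixed $U$, so for $\nu$ small enough (depending on $\{W_j\}$) every local configuration satisfies $\BL(\{W'_j\},p) \le \BL(\{W_j\},p) + \varepsilon$ uniformly, and the $\nu^{-O(1)}$ direction classes are absorbed into the $\de^{-\varepsilon}$ loss.

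The main obstacle, I expect, is that the supremizing subspace $U$ of \eqref{bl0} can jump under arbitrarily small perturbations of the $W_j$, so the semi-continuity step is not as trivial as it sounds: one must control \emph{all} near-extremizers simultaneously and quantify how the loss depends on $\nu$, rather than tracking a single optimal $U$. Together with the quantitative passage from the continuous BCCT bound for Gaussians to the discrete slab estimate with the sharp $\de^{-\varepsilon}$ loss, this is the heart of Maldague's ``regularization'' argument.
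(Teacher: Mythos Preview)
The paper does not contain a proof of this statement. Theorem~\ref{Domthm} is quoted verbatim as a black-box input, attributed to Maldague and referenced as ``Theorem~2 in \cite{maldague2022regularized}''. The paper only uses it (together with a short compactness argument yielding Theorem~\ref{BLthm}) to bound the broad term~II. So there is no ``paper's own proof'' to compare your proposal against; you are attempting to reprove a cited result.

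As a sketch of Maldague's theorem itself, your outline is in the right neighborhood but has a real gap in Step~1. In the rigid case each $\sum_{V_j}1_{V_{j,\de}}$ is indeed (up to Gaussian domination) a function of $\pi_{W_j}(x)$ alone, so the integrand has the BCCT shape $\prod_j F_j(\pi_{W_j}x)^{p/J}$. However, BCCT gives a finite constant only when the scaling condition $\tfrac{p}{J}\cdot J\cdot(N-k)=N$ holds and the BL datum is extremizable; for generic $p\in[1,J]$ the BCCT constant is infinite, and no isotropic or anisotropic ``parabolic rescaling of $\R^N$'' changes the value of $\BL(\{W_j\},p)$, since that quantity depends only on the subspaces $W_j$ and the exponent $p$, not on any scale. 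So rescaling cannot ``absorb the defect'' as you claim. The actual rigid estimate is a Kakeya--Brascamp--Lieb inequality (Bennett--Carbery--Tao type), and the $\de^{-\BL}$ factor arises from an induction-on-scales or heat-flow argument rather than a single application of BCCT plus a dilation. Your Step~2 is closer to the mark: upper semi-continuity of the BL functional on $G(N-k,N)^J$ is exactly the mechanism behind the regularization, and your identification of the jumping-extremizer issue as the delicate point is correct.
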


\begin{remark}
    {\rm  Later, we will let $N=(n-l)(l+1)$ and $k=(m-l)(l+1)$ for our application.
    }
\end{remark}

By a compactness argument, we deduce the following result.

\begin{theorem}\label{BLthm}
Consider in $\R^N$.
    Let $\cW_j\subset G(N-k,N)$ be a compact set for $j=1,\dots,J$. Fix $p\in[1,J]$. 
Define
\begin{equation}\label{bl}
\BL(\{\cW_j\}_{j=1}^J,p):=\sup_{W_1\in\cW_1,\dots,W_J\in\cW_J }\BL(\{W_j\}_{j=1}^J,p). 
\end{equation} 
Then for any $\cV_{j}=\{V_{j}\}\subset A(k,n)\ (j=1,\dots,J)$ being sets of $k$-planes, such that each $V_{j}\in \cV_{j}$ is orthogonal to some $W\in\cW_j$, we have
\begin{equation}\label{BL}
    \int\prod_{j=1}^J\bigg(\sum_{V_{j}\in\cV_{j}}1_{V_{j,\de}} \bigg)^{\frac{p}{J}}\le C(\{\cW_j\},p,\e) \cdot \de^{N-\e}\de^{-\BL(\{\cW_j\}_{j=1}^J,p)}\prod_{j=1}^J\bigg(\#\cV_{j}\bigg)^{\frac{p}{J}}.
\end{equation}
\end{theorem}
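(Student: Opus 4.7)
My plan is to derive Theorem \ref{BLthm} from Theorem \ref{Domthm} by a finite-covering argument on the compact product $\prod_{j=1}^{J}\cW_j$, exploiting that $p/J\le 1$ (so $x\mapsto x^{p/J}$ is subadditive) to split the integrand.

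First, I apply Theorem \ref{Domthm} at every tuple $\vec W=(W_1,\dots,W_J)\in\prod_j\cW_j$ to obtain a radius $\nu_{\vec W}>0$ such that \eqref{mblineq} holds with exponent $\BL(\{W_j\},p)$ whenever the direction of each $V_j$ sits in $Q_{\nu_{\vec W}}(W_j)$. By \eqref{bl}, $\BL(\{W_j\},p)\le \BL(\{\cW_j\},p)$, so this is the exponent I aim for. The open boxes $\prod_j Q_{\nu_{\vec W}/2}(W_j)$, indexed by $\vec W\in\prod_j\cW_j$, form an open cover of the compact product, so the Lebesgue number lemma yields finitely many ``anchor'' tuples $\vec W^{(1)},\dots,\vec W^{(M)}$ with associated radii $\nu^{(s)}=\nu_{\vec W^{(s)}}$ and a uniform $\lambda>0$ such that any product ball of radius $\lambda$ in $\prod_j\cW_j$ is contained in some $\prod_j Q_{\nu^{(s)}}(W_j^{(s)})$.

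Next, I cover each factor $\cW_j$ separately by balls $Q_\lambda(W_j^{(i_j)})$, $i_j=1,\dots,M_j$, and partition $\cV_j=\bigsqcup_{i_j}\cV_j^{(i_j)}$ by assigning each $V_j$ to the class whose center is closest to its direction. For any tuple $\vec i=(i_1,\dots,i_J)$, the centers $(W_1^{(i_1)},\dots,W_J^{(i_J)})$ lie in $\prod_j\cW_j$, and the Lebesgue property guarantees that the directions of all $V_j\in\cV_j^{(i_j)}$ lie in a common Maldague-prepared box $\prod_j Q_{\nu^{(s(\vec i))}}(W_j^{(s(\vec i))})$, so Theorem \ref{Domthm} applies to the anchor $\vec W^{(s(\vec i))}$. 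Using subadditivity of $x\mapsto x^{p/J}$, I split
\begin{equation*}
\int\prod_{j=1}^{J}\bigg(\sum_{V_j\in\cV_j}1_{V_{j,\de}}\bigg)^{p/J}\le\sum_{\vec i}\int\prod_{j=1}^{J}\bigg(\sum_{V_j\in\cV_j^{(i_j)}}1_{V_{j,\de}}\bigg)^{p/J},
\end{equation*}
apply Theorem \ref{Domthm} termwise, and bound each local exponent by the uniform $\BL(\{\cW_j\},p)$.

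To finish, I sum over $\vec i$ via
\begin{equation*}
\sum_{\vec i}\prod_{j=1}^{J}\big(\#\cV_j^{(i_j)}\big)^{p/J}=\prod_{j=1}^{J}\sum_{i_j=1}^{M_j}\big(\#\cV_j^{(i_j)}\big)^{p/J}\le\prod_{j=1}^{J}M_j^{1-p/J}\big(\#\cV_j\big)^{p/J},
\end{equation*}
where the last step uses concavity of $x\mapsto x^{p/J}$ (equivalently, Jensen's inequality). The resulting combinatorial factor depends only on $\{\cW_j\}$ and $p$, and is absorbed into $C(\{\cW_j\},p,\e)$, yielding \eqref{BL}. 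The only delicate step is extracting the uniform radius $\lambda$ out of the pointwise Maldague theorem; this is precisely what the Lebesgue number lemma gives on the compact product $\prod_j\cW_j$, and it is what lets a single finite partition of the $\cV_j$'s control all incidence tuples simultaneously.
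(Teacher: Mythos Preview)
Your proposal is correct and follows essentially the same compactness-plus-subadditivity strategy that the paper sketches: the paper's proof is a one-paragraph outline invoking compactness of $\cW_1\times\cdots\times\cW_J$ to produce a finite cover by Maldague-prepared boxes and then ``splitting the LHS by triangle inequality,'' which is precisely the subadditivity of $x\mapsto x^{p/J}$ that you spell out. Your use of the Lebesgue number lemma to manufacture a single scale $\lambda$ at which to partition each $\cV_j$, and the Jensen step to recombine $\sum_{\vec i}\prod_j(\#\cV_j^{(i_j)})^{p/J}$, are natural ways to fill in the details the paper leaves to the reader.
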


\begin{proof}
    We briefly explain how to use compactness argument to deduce Theorem \ref{BLthm} from Theorem \ref{Domthm}. For any $\vec W=(W_1,\dots,W_J)\in \cW_1\times \dots\times \cW_J$, we use Theorem \ref{Domthm} to find a small number $\nu>0$, so that \eqref{BL} is true when all the slabs are in $\nu$-neighborhood of $W_j^\perp$. In other words, there exists a $O(\nu)$-neighborhood of $(W_1,\dots,W_J)$, for which we call $B(\vec W)$, so that \eqref{BL} is true when all the slabs are in $B(\vec W)$. Then we use the compactness of $\cW_1\times \dots\times \cW_J$ to find a finite covering using $B(\vec W)$. Using this finite covering and splitting the LHS of \eqref{BL} by triangle inequality, the proof of \eqref{BL} is not hard.
\end{proof}

\bigskip

\subsubsection{Geometry of \texorpdfstring{$\Aloc(l,n)$}{}}\hfill

Recall that we can view $\Aloc(l,n)$ as $[-1,1]^{(n-l)(l+1)}$ which is a cube of bounded length centered at the origin in $\R^{(n-l)(l+1)}$. We will interpret $\Aloc(m,n)$ in the following way. Actually $\Aloc(m,n)$ can be viewed as a subset of $A((m-l)(l+1),(n-l)(l+1))$. For simplicity, we denote $k=(m-l)(l+1), N=(n-l)(l+1)$. For $V\in\Aloc(m,n)$, we see that $v_j(V)=V\cap \Pi_j$ is an $(m-l)$-plane for $j=0,\dots,l$. Therefore, we obtain a $k$-plane $v_0(V)\times \dots \times v_l(V)\subset \R^N$.
We summarize our embeddings:
\begin{equation}
    \begin{split}
        \Aloc(l,n)&\longrightarrow \R^N,\\
        \Aloc(m,n)&\longrightarrow A(k,N).
    \end{split}
\end{equation}
We adopt the following notation.
\begin{definition}\label{deftilde}
    For $V\in \Aloc(m,n)$, we use $\wt V\in A(k,N)$ to denote its image in the new space given by the correspondence.
\end{definition}

We remark that while $\Aloc(l,n)$ and $\R^N$ have the same dimension, $\Aloc(m,n)$ is a lower dimensional submanifold in $A(k,N)$.

The incidence relation is preserved. For $L\in\Aloc(l,n)$ and $V\in \Aloc(m,n)$, then $L\subset V$, if and only if $\wt L\in \wt V$. 

The notion of parallelity is also preserved. Suppose $V\in\Aloc(m,n)$. Let $W=\vec\bw (V)$. Let $W^\perp$ be the orthogonal complement of $W$ in $\R^{n-l}$. Then, we see that
$\wt V=v_0(V)\times \dots\times v_l(V)$ has its orthogonal complement $\underbrace{W^\perp\times \dots\times W^\perp}_{l+1 \text{ times}}$ in $\R^N$. Therefore, for $V_1,V_2\in\Aloc(m,n)$, we have $\vec\bw(V_1)=\vec\bw(V_2)$ if and only if $\wt V_1\parallel \wt V_2$. Because of this observation, we can view $G(m-l,n-l)$ as a subset of $G(k,N)$, so that any $\vec\bw\in G(m-l,n-l)$ corresponds to $\underbrace{\vbw\times \dots\times \vbw}_{l+1 \text{ times}}$. The set of balls $\cQ_{K^{-1}}=\{Q_{K^{-1}}\}$ correspond to $\wt\cQ_{K^{-1}}=\{\wt Q_{K^{-1}}\}$ which form a partition of the $K^{-1}$-neighborhood of $G(m-l,n-l)$ in $G(k,N)$.

We define the transverse tuples under the setting in $\R^N$.
\begin{align}
    \wt{\text{Trans}}^{d-m+2}:
    =\{ (\wt\cU_{m-l},\dots,\wt\cU_{d-l+1}): (\cU_{m-l},\dots,\cU_{d-l+1})\in \textup{Trans}^{d-m+2}\}.
\end{align}
We have the transverse property.

\begin{lemma}\label{transvers}
    Let $(\wt\cU_{m-l},\dots,\wt\cU_{d-l+1})\in \wt{\text{Trans}}^{d-m+2}$ and $\wt U_j\in \wt \cU_j$. Then any $\Pi'\in G((d-l)(l+1)+l,N)$ cannot contain all the $\wt U_j$. 
\end{lemma}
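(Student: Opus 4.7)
The plan is to argue by contradiction: suppose some $\Pi' \in G((d-l)(l+1)+l, N)$ contains every $\wt U_j$, and extract from it an element of $G(d-l, n-l)$ that violates the membership $(\cU_{m-l}, \ldots, \cU_{d-l+1}) \in \textup{Trans}^{d-m+2}$.

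The first step is to reduce to the diagonally embedded case. Each $\wt U_j$ lies within $K^{-n}$ (in the Grassmannian metric on $G(k,N)$) of a diagonal subspace $U_j^{(l+1)} := \underbrace{U_j \times \cdots \times U_j}_{l+1 \text{ times}}$ with $U_j \in \cU_j \subset G(m-l, n-l)$. The hypothesis $\wt U_j \subset \Pi'$ therefore gives $U_j^{(l+1)} \subset \cN_{O(K^{-n})}(\Pi')$. Because the definition of $\textup{Trans}^{d-m+2}$ is phrased with a $100 K^{-n}$ tolerance, the $O(K^{-n})$ slack introduced here will be harmless; for clarity in the plan I argue as if $U_j^{(l+1)} \subset \Pi'$ exactly.

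The heart of the argument is linear-algebraic. Set $M := \codim \Pi' = (n-l)(l+1) - \bigl((d-l)(l+1)+l\bigr) = (n-d)(l+1) - l$ and choose a basis $\phi_1, \ldots, \phi_M$ of $(\Pi')^\perp$. Decompose each functional by the natural splitting $\R^N = (\R^{n-l})^{\oplus(l+1)}$ as $\phi_i = (a_{i,0}, \ldots, a_{i,l})$ with $a_{i,s} \in \R^{n-l}$. The inclusion $U_j^{(l+1)} \subset \Pi'$ reads $\sum_s \langle a_{i,s}, u_s \rangle = 0$ for every independent choice of $u_0, \ldots, u_l \in U_j$; specializing to $u_s$ supported in a single slot forces $a_{i,s} \perp U_j$ for all $i$, $s$, and $j$. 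Setting $W := \operatorname{span}\{a_{i,s} : 1 \le i \le M,\ 0 \le s \le l\} \subset \R^{n-l}$, this yields $U_j \subset W^\perp$ for every $j$.

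It remains to lower bound $\dim W$. By construction, each $\phi_i$ lies in $W^{\oplus(l+1)} \subset \R^N$, so $(\Pi')^\perp \subseteq W^{\oplus(l+1)}$ and hence $M \le (l+1)\dim W$; substituting the value of $M$ gives $\dim W \ge (n-d) - l/(l+1)$. Since $\dim W$ is an integer and $l/(l+1) < 1$, this forces $\dim W \ge n-d$, i.e.\ $\dim W^\perp \le d-l$. Enlarging $W^\perp$ to any $\Pi \in G(d-l, n-l)$ produces a subspace with $U_j \subset \Pi$ for every $j$, contradicting the transversality hypothesis. The only nontrivial bookkeeping is to verify that the $K^{-n}$ perturbation absorbed in the first step, combined with the extension $W^\perp \hookrightarrow \Pi$, still places every $\cU_j$ inside $\cN_{100K^{-n}}(G(m-l, \Pi))$; this is a routine constant chase since all the dimension inequalities above are strict.
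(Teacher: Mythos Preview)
Your argument is correct but follows a genuinely different route from the paper. The paper's proof is more direct: since $\wt U_j = U_j \times \cdots \times U_j$ is the full product, the intersection $\Pi' \cap E_t$ with any coordinate factor $E_t \cong \R^{n-l}$ already contains every $U_j$. Because the $E_t$ are independent, $\sum_t \dim(\Pi' \cap E_t) \le \dim \Pi' = (d-l)(l+1)+l$, so by pigeonhole some $t$ satisfies $\dim(\Pi' \cap E_t) \le d-l$; extending that slice to a $(d-l)$-plane $\Pi$ yields the contradiction.

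Your approach dualizes this: you work in $(\Pi')^\perp$, collect the factor-components $a_{i,s}$ into $W$, and bound $\dim W$ from below. The integer-ceiling step $\dim W \ge (n-d) - l/(l+1) \Rightarrow \dim W \ge n-d$ is exactly the dual of the paper's pigeonhole floor. Both arguments exploit the same product structure of $\wt U_j$; neither is intrinsically stronger, and yours is arguably a bit more coordinate-free.

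One simplification you can make: by the paper's definitions, $\wt U_j$ is \emph{exactly} the product $U_j^{(l+1)}$, not merely $K^{-n}$-close to it --- the embedding $G(m-l,n-l) \hookrightarrow G(k,N)$ is $\vec\bw \mapsto \vec\bw \times \cdots \times \vec\bw$, and $\wt\cU_j$ is literally the image of $\cU_j$ under this map. So your opening reduction step and the closing ``constant chase'' are unnecessary: you get $U_j \subset \Pi$ on the nose, and since $U_j \in \cU_j$, each $\cU_j$ already meets $G(m-l,\Pi)$ itself, not just its $100K^{-n}$-neighbourhood.
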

\begin{proof}
    By definition, we have $(\cU_{m-l},\dots,\cU_{d-l+1})\in \text{Trans}^{d-m+2}$, $U_j\in \cU_j$ and
    \[ \wt U_j=U_j\times \dots\times U_j. \]

    Suppose by contradiction that there exists $\Pi'\in G((d-l)(l+1)+l,N)$ that contains $\wt U_j$ for all $j$. For $t=0,1,\dots,l$, consider $\Pi'\cap \big(\{0\}^{(n-l)t}\times \R^{n-l}  \times \{0\}^{(n-k)(l-t)}\big)$. By pigeonhole principle, there exists $t$ such that 
    \[ \dim\bigg(\Pi'\cap \big(\{0\}^{(n-l)t}\times \R^{n-l}  \times \{0\}^{(n-k)(l-t)}\big)\bigg)\ge \lfloor \frac{(d-l)(l+1)+l}{l+1}\rfloor=d-l. \]
    Denote $\Pi=\Pi'\cap \big(\{0\}^{(n-l)t}\times \R^{n-l}  \times \{0\}^{(n-k)(l-t)}\big)$.
    Then $\Pi\in G(d-l,n-l)$, and $\Pi$ contains $U_j$ for all $j$. This contradicts the definition of $\text{Trans}^{d-m+2}$. 
\end{proof}

Then, passing from $\Aloc(l,m)$ to $\R^N$, we have
\begin{equation}\label{2term}
    \text{II}\sim C_K\sum_{(\wt\cU_{m-l},\dots,\wt\cU_{d-l+1})\in\wt{\textup{Trans}}^{d-m+2}}\int_{\R^N}\prod_{j=m-l}^{d-l+1}(\sum_{V\in\cV, \dir(\wt V)\in \wt\cU_j}1_{\wt V_\de})^{\frac{p}{d-m+2}}
\end{equation}

\bigskip

\subsubsection{The estimate of \textup{II}}

\begin{proposition}
 \begin{equation}\label{estII}
    \tx{II}\le C_{K,\e}\de^{-\e^2-(m-l)(d-m)(p-1)}\sum_{V\in\cV}|\cN_\de(\bV)|.
\end{equation}   
\end{proposition}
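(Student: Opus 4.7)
The plan is to apply the Brascamp-Lieb inequality (Theorem \ref{BLthm}) tuple-by-tuple to \eqref{2term}, bound the BL constant via the transversality provided by Lemma \ref{transvers}, and close using condition \eqref{condition} at scale $r=1$. Fix a tuple $(\wt\cU_{m-l},\dots,\wt\cU_{d-l+1})\in\wt{\textup{Trans}}^{d-m+2}$ and set
\[
\cV_j:=\{\wt V\in A(k,N):V\in\cV,\ \vec\bw(V)\in\cU_j\},
\]
with $k=(m-l)(l+1)$ and $N=(n-l)(l+1)$. Each $\wt V\in\cV_j$ is orthogonal to some direction in the compact set $\wt\cU_j^\perp\subset G(N-k,N)$, so Theorem \ref{BLthm} yields
\[
\int_{\R^N}\prod_{j=m-l}^{d-l+1}\Big(\sum_{\wt V\in\cV_j}1_{\wt V_\de}\Big)^{p/(d-m+2)}\le C_{K,\e}\,\de^{N-\e^3-\BL}\,\prod_j(\#\cV_j)^{p/(d-m+2)},
\]
where $\BL=\BL(\{\wt\cU_j\},p)$ is the Brascamp-Lieb constant from \eqref{bl}.

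The heart of the matter is the upper bound
\[
\BL\bigl(\{\wt\cU_j\},p\bigr)\;\le\;k-\bigl((l+1)(d-m)+\beta\bigr)(p-1)
\]
for every $p$ in the range \eqref{rangeofp}. Expanding the definition \eqref{bl0} and writing $\wt U_j=U_j^{l+1}$ with $U_j\in G(m-l,n-l)$, this reduces to showing
\[
F(U):=(1-p)\dim U+\frac{p}{d-m+2}\sum_j\dim(U\cap\wt U_j)\;\le\;k-\bigl((l+1)(d-m)+\beta\bigr)(p-1)
\]
for every subspace $U\le\R^N$. I would first reduce to product subspaces $U=U^{(0)}\times\cdots\times U^{(l)}$ by a symmetrization argument exploiting the $(l+1)$-fold product structure of the $\wt U_j$; on products the intersection splits as $\dim(U\cap\wt U_j)=\sum_s\dim(U^{(s)}\cap U_j)$, collapsing the problem to one in $\R^{n-l}$. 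Lemma \ref{transvers} translates to: no $(d-l)$-plane of $\R^{n-l}$ contains all the $U_j$. Combined with a case analysis on $\dim U^{(s)}$ against the thresholds $m-l$ and $d-l$, this produces the required control on $\sum_j\dim(U^{(s)}\cap U_j)$. The range \eqref{rangeofp} is precisely calibrated so that the extremizing subspaces, which simultaneously saturate the transversality and the dimension-counting, still respect the target bound. This BL computation, which mirrors the analysis of \cite{gan2023hausdorff} but with additional factor-$(l+1)$ bookkeeping, is the main obstacle of the proposition.

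Granted the BL bound, closing the estimate is mechanical. Bound each tuple by the trivial $\prod_j(\#\cV_j)^{p/(d-m+2)}\le\#\cV^p$ and absorb the at-most-$K^{O(1)}$ tuples into $C_{K,\e}$. Condition \eqref{condition} at scale $r=1$ gives $\#\cV\le\de^{-(m+1)(d-m)-\beta}$, whence
\[
\#\cV^p\;\le\;\#\cV\cdot\de^{-((m+1)(d-m)+\beta)(p-1)}.
\]
Since $|\cN_\de(\bV)|\sim\de^{(l+1)(n-m)}=\de^{N-k}$, we have $\#\cV\cdot\de^{N-k}\sim\sum_V|\cN_\de(\bV)|$. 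Combining, the net exponent of $\de$ outside the factor $\sum_V|\cN_\de(\bV)|$ is
\[
N-\e^3-\BL-\bigl((m+1)(d-m)+\beta\bigr)(p-1)-(N-k)\;\ge\;-\e^3-(m-l)(d-m)(p-1),
\]
using the BL bound in the last step. Choosing $\e$ small enough that $\e^3\le\e^2$ yields \eqref{estII}.
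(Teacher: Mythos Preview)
Your overall strategy and the closing numerology are correct and match the paper exactly: apply Theorem~\ref{BLthm} to each tuple in \eqref{2term}, bound the BL constant, and close using $\#\cV\le\de^{-(m+1)(d-m)-\beta}$ from \eqref{condition} together with $|\cN_\de(\bV)|\sim\de^{N-k}$. The target inequality $\BL\le k-\bigl((l+1)(d-m)+\beta\bigr)(p-1)$ is equivalent to the paper's \eqref{boundBL}.

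The difference is in how you propose to bound the BL constant. Your route --- symmetrize to reduce to product subspaces $U=U^{(0)}\times\cdots\times U^{(l)}$, then descend to $\R^{n-l}$ --- is unnecessary and the symmetrization step is not justified: it is not obvious that the supremum in \eqref{bl0} over all $U\le\R^N$ is attained at product subspaces. You also read Lemma~\ref{transvers} backwards: that lemma \emph{lifts} the transversality from $\R^{n-l}$ (the definition of $\textup{Trans}^{d-m+2}$) to $\R^N$, not the other way. The paper exploits exactly this lifted form and works directly with general $U\le\R^N$, in two short cases on $u=\dim U$. If $u\ge(l+1)(d-l)+\beta$, the trivial bound $\dim\pi_{W_j}(U)\ge u-k$ from Lemma~\ref{easylem} already gives $u-p(u-k)\le(l+1)(d-l)+\beta-\bigl((l+1)(d-m)+\beta\bigr)p$ since $p>1$. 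If $u<(l+1)(d-l)+\beta$, then $u\le(l+1)(d-l)+l$ (as $\beta\le l+1$), so Lemma~\ref{transvers} forbids $U$ from containing all the $W_j^\perp=\wt U_j$; by the equality case of Lemma~\ref{easylem} at least one projection gains a $+1$, yielding $\sum_j\dim\pi_{W_j}(U)\ge J(u-k)+1$, and the range \eqref{rangeofp} closes the inequality. No product reduction is needed.
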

\begin{proof}
Denote $J=d-m+2$. For each $\wt\cU_j$ ($m-l\le j\le d-l+1$), let $\cW_j=\{W^\perp:W\in \wt\cU_j\}$. We see that $\cW_j\subset G(N-k,N)$. For simplicity, we denote the Brascamp-Lieb  constant (see \eqref{bl} and \eqref{bl0}) by
\begin{equation}\label{recallBL}
    \BL:=\BL(\{\cW_j\}_{j=1}^J,p). 
\end{equation} 
Noting from \eqref{rangeofp} that $p\le d-m+2=J$, by Theorem \ref{BLthm}, we have
\begin{equation}
    \int_{\R^N} \prod_{j=m-l}^{d-l+1}(\sum_{V\in\cV, \dir(\wt V)\in \wt\cU_j}1_{\wt V_\de})^{\frac{p}{d-m+2}}\lesssim \de^{N-\e^2}\de^{-\BL}\prod_{j=1}^J\bigg(\#\wt\cU_{j}\cap \cV\bigg)^{\frac{p}{J}}\le\de^{N-\e^2-\BL}(\#\cV)^p.
\end{equation}
Here, the implicit constant depends on $\e$ and $\{\cW_j\}$. $\{\cW_j\}$ further depends on the covering $\cQ_{K^{-n}}$. Since this covering is at the scale $K^{-n}$, we can simply denote this implicit constant by $C_{K,\e}$. 
Also, the reason we choose $\de^{-\e^2}$ instead of $\de^{-\e}$ is to close the induction later.

Therefore,
\[ \tx{II}\le C_{K,\e} \de^{N-\e^2-\BL}(\#\cV)^p.\]
Here, we absorb $C_K$ from \eqref{2term} into $C_{K,\e}$.

It remains to prove that 
\begin{equation}\label{clm}
    \de^{N-\BL}(\#\cV)^p\lesssim \de^{-(m-l)(d-m)(p-1)}\sum_{V\in\cV}|\cN_\de(\bV)|. 
\end{equation}
To prove this inequality, we first note that $|\cN_\de(\bV)|\sim |\wt V_\de|\sim \de^{N-k}$ and by \eqref{condition} that $\#\cV\le \de^{-(m+1)(d-m)+\beta}$.
Therefore, \eqref{clm} is equivalent to
\[ \de^{N-\BL}\de^{-\big((m+1)(d-m)+\beta\big)(p-1)}\lesssim \de^{-(m-l)(d-m)(p-1)}\de^{N-k}. \]
Recall that $N=(l+1)(n-l), k=(l+1)(m-l)$.
It suffices to prove
\begin{equation}\label{boundBL}
    \BL\le (l+1)(d-l)+\beta-\big((l+1)(d-m)+\beta\big)p. 
\end{equation} 
Recall the definition of $\BL$ in \eqref{recallBL}. We may assume
\[ \BL=\sup_{U\le \R^N}\bigg(\dim U-\frac{p}{J}\sum_{j=k}^{d+1}\dim \pi_{W_j}(U)\bigg), \]
where $W_j\in \cW_j$.
\begin{lemma}\label{easylem}
    Let $W,U\le \R^N$ be two subspaces. Then,
     \[\dim\pi_{W}(U)\ge \dim W+\dim U-N.\] 
     The equality holds if and only if $U\supset W^\perp$. If $U\not\supset W^\perp$, then \[\dim\pi_{W}(U)\ge \dim W+\dim U-N+1.\]
\end{lemma}
\begin{proof}
    We just look at the linear map $\pi_W|_U: U\rightarrow \pi_W(U)$. We have $\dim(\pi_W(U))=\dim(U)-\dim\textup{Ker}(\pi_W|_U)$. We note that $\dim\textup{Ker}(\pi_W|_U)\le \dim\textup{Ker}(\pi_W)=N-\dim W$, and $\dim\textup{Ker}(\pi_W|_U)= \dim\textup{Ker}(\pi_W)$ if and only if $U\supset W^\perp$.
\end{proof}

We return to the proof of \eqref{boundBL}. Recall that $J=d-m+2$.
We denote $u=\dim U$ and consider three cases.
\medskip

\begin{itemize}
    \item $u\ge (l+1)(d-l)+\beta$: $\dim U-\frac{p}{J}\sum_{j=m-l}^{d-l+1}\dim \pi_{W_j}(U)\le u-p(N-k+u-N)=u-p(u-k)\le (l+1)(d-l)+\beta-\big((l+1)(d-m)+\beta\big)p$, since $p>1$.
    \medskip
    \item $ u< (l+1)(d-l)+\beta$: In this case, $u\le (l+1)(d-l)+l$. By Lemma \ref{transvers}, $U$ cannot contain all the $W_j^\perp$. By Lemma \ref{easylem}, we have \[\sum_{j=m-l}^{d-l+1}\dim\pi_{W_j}(U)\ge J(u-k)+1 .\]
    Therefore,
    $\dim U-\frac{p}{J}\sum_{j=m-l}^{d-l+1}\dim \pi_{W_j}(U)\le u-p(u-k)-\frac{p}{J}\le (l+1)(d-l)+\beta-\big((l+1)(d-m)+\beta\big)p$. The last inequality is equivalent to $\big((l+1)(d-l)+\beta-u-J^{-1}\big)p\le (l+1)(d-l)+\beta-u$. Since the RHS is non-negative, the inequality is always true if $(l+1)(d-l)+\beta-u-J^{-1}\le 0$. Or when $(l+1)(d-l)+\beta-u-J^{-1}>0$,
    we just need $p\le \frac{(l+1)(d-l)+\beta-u}{(l+1)(d-l)+\beta-u-J^{-1}}$.
    % \medskip
    % \item $u\le (l+1)(m-l)-1$: We simply use $\dim U-\frac{p}{J}\sum_{j=k}^{d+1}\dim \pi_{W_j}(U)\le \dim U\le (l+1)(m-l)-1\le (l+1)(d-l)+\beta-\big((l+1)(d-m)+\beta\big)p$, where the last inequality is equivalent to
    % \[ p\le \frac{(l+1)(d-m)+1+\beta}{(l+1)(d-m)+\beta}.\]  
\end{itemize}
We have finished the proof of claim \eqref{clm}. We obtain that 
\begin{equation}
    \tx{II}\le C_{K,\e}\de^{-\e^2-(m-l)(d-m)(p-1)}\sum_{V\in\cV}|\cN_\de(\bV)|.
\end{equation}
\end{proof}

\bigskip

\subsection{Estimate of \textup{I} and induction on scales}\hfill

We turn to the term $\tx{I}$, which is the first term on the right hand side of \eqref{1term}. We will first do a rescaling to rewrite I in the form of the LHS of \eqref{kakeyaineq}, so that we can do induction on scales. Also, we will check the corresponding spacing condition \eqref{condition} is satisfied.

For a fixed $Q_{K^{-1}}\in \cQ_{K^{-1}}$, we choose $W^\circ\in G(m-l,n-l)$ to be the center of $Q_{K^{-1}}$. Consider the following way of partitioning $\Aloc(l,n)$ into about $K^{(n-m)(l+1)}$ many $K^{-1}$-thickened slabs $\cN_{K^{-1}}(\bM)$, where $M\in\Aloc(m,n)$ and $\bM=\{L\in\Aloc(l,n): L\subset M\}$.. Recall that $\Aloc(l,n)=\underbrace{[-1,1]^{n-l}\times \dots\times [-1,1]^{n-l}}_{l+1 \textup{~times}}$. For $j=0,1,\dots,l$, the $j$-th copy of $[-1,1]^{n-l}$ lies in $\Pi_j$. We choose $(m-l)$-dimensional $K^{-1}$-slabs in $\Pi_j$, so that their cores are parallel to $W^\circ$, and they form a covering of $[-1,1]^{n-l}$ with bounded overlap. We denote their cores by $W^j_{1},\dots,W^j_{\mu}$, where $\mu\sim K^{n-m}$. Hence,
\[ [-1,1]^{n-l}=\bigcup_{\alpha=1}^\mu N_{K^{-1}}(W_\alpha^j). \]
Define $\M=\M_{Q_{K^{-1}}}=\{M\}$, where each $M\in \Aloc(m,n)$ has the form
\[ \wt M=W_{\alpha_0}^0\times W_{\alpha_1}^1\times \dots\times W_{\alpha_l}^l. \]
We see that
\[ \Aloc(l,n)=\bigcup_{M\in\M}\cN_{K^{-1}}(\bM) \]
is a covering with bounded overlap. Also, $\#\M\sim K^{(n-m)(l+1)}$.

By the identification $\Aloc(l,n)=[-1,1]^N$, we can identify $\cN_{K^{-1}}(\bM)$ with $N_{K^{-1}}(\wt M)\cap [-1,1]^N$. Interested reader can further check that $\{\wt M: M\in\M\}$ are parallel $k$-dimensional planes in $\R^N$, though we do not need this fact.
We note that for any $V\in \Aloc(m,n)$ with $\vec\bw(V)\in Q_{K^{-1}}$, there exists $M\in \M_{Q_{K^{-1}}}$ such that $\cN_\de(\bV)\subset \cN_{K^{-1}}(\bM)$. Actually, when viewing them in $\R^N$, $N_{K^{-1}}(\wt V)$ and $N_{K^{-1}}(\wt M)$ are comparable and $N_{\de}(\wt V)\subset N_{K^{-1}}(\wt M)$. We assign $V$ to such an $M$ and denote by $V\prec M$. 

% By the identification $\Aloc(l,n)=[-1,1]^N$, we can identify $\cN_{K^{-1}}(\bM)$ with $N_{K^{-1}}(\wt M)\cap [-1,1]^N$. Interested reader can further check that $\{\wt M: M\in\M\}$ are parallel $k$-dimensional planes in $\R^N$, though we do not need this fact.

% Let $W^\circ_{K^{-1}}$ denote $N_{K^{-1}}(V^\circ)\cap B^n(0,1)$ which is a $K^{-1}$-slab in $\R^n$, centered at the origin. For any $V\in\cV$ with $\vec\bw(V)\in Q_{K^{-1}}$, we have $\vec\bw(V)\cap B^n(0,1)\subset C V_{K^{-1}}^\circ$. Therefore, $V_\de$ is contained in a translation of $C V^\circ_{K^{-1}}$.

To estimate $\| \sum_{V\in\cV, \vec\bw(V)\in Q_{K^{-1}}}1_{\cN_\de(\bV)} \|_{L^p(\Aloc(l,n))}^p$, we partition the integration domain to get
\begin{equation}\label{ineq1}
    \| \sum_{V\in\cV, \vec\bw(V)\in Q_{K^{-1}}}1_{\cN_\de(\bV)} \|_{L^p(\Aloc(l,n))}^p\sim\sum_{M\in\M}\| \sum_{V\in\cV, \vec\bw(V)\in Q_{K^{-1}}, V\prec M}1_{\cN_\de(\bV)} \|_{L^p(\cN_{K^{-1}}(\bM))}^p.
\end{equation}

Fix an $M\in\M$, we will do the rescaling for each $\cN_{K^{-1}}(\bM)$ so that under the rescaling, $\cN_\de(\bV)$ becomes $\cN_{\de K}(\bV')$ and $\cN_{K^{-1}}(\bM)$ becomes $\Aloc(l,n)$. Suppose 
\[ \wt M=W_{\alpha_0}^0\times W_{\alpha_1}^1\times \dots\times W_{\alpha_l}^l. \]
Consider such a linear transformation in $\R^N$ which we call $\cL$. It maps $\wt M$ to $\underbrace{W^\circ \times \dots\times W^\circ}_{l+1 \textup{~times}}$. Then it do the $K$-dilation in the direction of $\bigg(\underbrace{W^\circ \times \dots\times W^\circ}_{l+1 \textup{~times}}\bigg)^\perp$. For each $V\prec M$, we use $V'$ to denote the $m$-plane in $\Aloc(m,n)$ such that $\wt {V'}=\cL(\wt V)$. Interpreted in the coordinate of $\Aloc(l,n)$, $\cN_\de(\bV)$ becomes $\cN_{\de K}(\bV')$.

We have
\begin{equation}\label{ineq2}
\begin{split}
    \|\sum_{V\in\cV, \vec\bw(V)\in Q_{K^{-1}},V\prec M}1_{\cN_\de(\bV)}\|_{L^p(\cN_{K^{-1}}(\bM))}^p\\
    \sim K^{-(N-k)}\|\sum_{V\in\cV, \vec\bw(V)\in Q_{K^{-1}},V\prec M}1_{\cN_{\de K}(\bV')}\|_{\Aloc(l,n)}^p 
\end{split}
\end{equation} 

Next, we check the spacing condition \eqref{condition} for the set $\cV'=\cV'_M:=\{V': V\in\cV, \vec\bw(V)\in Q_{K^{-1}},V\prec M\}$. We want to prove that for any ball $Q_r\subset \Aloc(m,n)$ of radius $r$ ($\de K\le r\le 1$), we have
\[ \#\{V'\in \cV': V'\in Q_r\}\lesssim (r/\de K)^{(m+1)(d-m)+\beta}. \]
If we view $Q_r$ in the coordinate $[-1,1]^{(n-l)(l+1)}$, then there is $V_\circ=v_0(V)\times v_1(V)\times\dots\times v_l(V)$, so that $Q_r$ consists of those $V'$ so that $\wt {V'}\cap [-1,1]^N$ lie within $r$-neighborhood of $V_\circ$. After taking $\cL^{-1}$, 
\[ \{V': \wt {V'}\cap [-1,1]^N \textup{~lie~within~} r\textup{-neighborhood~of~} V_\circ\} \]
becomes
\[ \{V: \wt V\cap [-1,1]^N \textup{~lie~within~} rK^{-1}\textup{-neighborhood~of~} \cL^{-1}(V_\circ)\}. \]
Therefore,
\[ \#\{V'\in\cV':V'\in Q_r\}=\#\{V\in\cV: V\in Q_{r K^{-1}}\}, \]
where $Q_{r K^{-1}}\subset A(m,n)$ is a ball of radius $r K^{-1}$. By condition \eqref{condition}, we have
\[ \#\{V'\in\cV':V'\in Q_r\}\lesssim (r/\de K)^{(m+1)(d-m)+\beta}. \]

\bigskip

We are ready to prove \eqref{kakeyaineq}. We will induct on $\de$. Of course, \eqref{kakeyaineq} is true for small $\de$ if we choose $C_\e$ in \eqref{kakeyaineq} sufficiently large. Suppose \eqref{kakeyaineq} is true for numbers $\ge 2\de$. We will use induction hypothesis for the scale $\de K$ to bound \eqref{ineq2}. However, $\cV'$ satisfies
\begin{equation}\label{condi}
    \#\{V'\in \cV': V'\in Q_r\}\le C\cdot (r/\de K)^{(k+1)(d-k)+\beta}, 
\end{equation} 
where there is an additional constant $C$ compared with \eqref{condition}. We hope to partition $\cV'$ into subsets so that each subset satisfies the inequality above with $C=1$. This is done by the following lemma. Since the lemma is standard, we just omit the proof.

\begin{lemma}\label{goodlem}
    Let $A\subset [0,1]^m$ be a finite set that satisfies
    \[ \#(A\cap Q_r)\le M (r/\de)^s, \]
    for any $Q_r$ being a ball of radius $r$ in $[0,1]^m$ for $\de\le r\le 1$. Then we can partition $A$ into $\lesssim M^{O(1)}$ subsets $A=\sqcup A'$ such that each $A'$ satisfies
    \[ \#(A'\cap Q_r)\le (r/\de)^s, \]
    for any $Q_r$ being a ball of radius $r$ for $\de\le r\le 1$.
\end{lemma}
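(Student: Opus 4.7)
The plan is to prove this by a greedy coloring argument combined with dyadic pigeonholing over scales. The core idea is to process the points of $A$ one at a time and assign each to the first color class into which it ``fits'', where ``fitting'' means that no ball at any scale overflows the refined budget $(r/\delta)^s$. One then bounds the number of colors by arguing that if a point is blocked from too many classes, those blocking balls concentrate at a common scale and witness a density exceeding the input hypothesis.

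In detail, I would first reduce to dyadic scales: it suffices to enforce $\#(A' \cap Q_r) \le (r/\delta)^s$ only for $r = 2^j \delta$ with $0 \le j \le J := \lceil \log_2 \delta^{-1}\rceil$, since any $r \in [\delta,1]$ lies between consecutive dyadic values and each $r$-ball is contained in an $O(r)$-dyadic ball. Second, fix an arbitrary ordering on $A$ and assign each point $a$ greedily to the smallest-indexed $A'_c$ for which adding $a$ preserves the refined spacing bound at every dyadic scale. To bound the total number of colors, suppose $a$ is blocked from classes $1, \ldots, K-1$. For each blocked $c$ there is a dyadic scale $r_c$ and a ball $Q_{r_c} \ni a$ with $\#(A'_c \cap Q_{r_c}) = (r_c/\delta)^s$. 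Pigeonholing the scales over the $J$ dyadic values, some scale $r$ is used by at least $(K-1)/J$ colors; the associated balls all contain $a$, so they sit inside the ball $Q_{2r}$ centered at $a$, which therefore contains at least $\tfrac{K-1}{J}(r/\delta)^s$ points of $A$. Comparing with the hypothesis $\#(A \cap Q_{2r}) \le M(2r/\delta)^s$ gives $K \lesssim M \log \delta^{-1}$.

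The main obstacle is the logarithmic factor: the greedy argument literally yields $M \log \delta^{-1}$ classes rather than a clean $M^{O(1)}$. In the application this is harmless, since $\log \delta^{-1} = \delta^{-o(1)}$ is absorbed into the $\delta^{-\varepsilon}$-type losses of the outer induction on scales; so for the purposes of Theorem \ref{kakeyathm} the argument above already suffices. To get the stated $M^{O(1)}$ bound as written, one uses an iterative peeling: a one-shot greedy selection (phrased as a probabilistic sampling with inclusion probability $\sim 1/M$ followed by a union bound over dyadic scales) produces a subset $A' \subset A$ of size $\ge \#A / M^{O(1)}$ satisfying the strict spacing condition; removing $A'$ and repeating on the residual, which still obeys the same hypothesis, partitions $A$ in $M^{O(1)}$ rounds.
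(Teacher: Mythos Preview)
The paper omits the proof of this lemma entirely, calling it standard, so there is no paper argument to compare against. Your greedy coloring argument is correct and cleanly yields a partition into $O(M\log\delta^{-1})$ pieces; as you observe, this already suffices for both places the lemma is used (in Section~\ref{sec4} it is invoked with $M=O(1)$, and in Section~\ref{sec5} with $M\lesssim\delta^{-\varepsilon}$, so an extra $\log\delta^{-1}$ is absorbed either way).

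The second paragraph, where you try to upgrade to a genuine $M^{O(1)}$ bound, has two gaps. First, the probabilistic extraction does not go through as stated: sampling each point with probability $\sim 1/M$ gives the correct expectation in every ball, but at the smallest scales $r\sim\delta$ the target $(r/\delta)^s$ is $O(1)$, so Chernoff only gives an $O(1)$ failure probability per ball, while the union bound must run over $\sim\delta^{-m}$ balls. Second, even if one grants an extraction of size $\ge\#A/M^{O(1)}$ satisfying the strict bound, iterating on the residual removes only a $M^{-O(1)}$ \emph{fraction} of the remaining points each round, so reaching the empty set takes $M^{O(1)}\log(\#A)\sim M^{O(1)}\log\delta^{-1}$ rounds --- the logarithm has not been eliminated. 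Since the paper itself gives no argument and the log-lossy greedy bound is all that is actually needed downstream, the cleanest fix is to keep your first argument and drop the second, possibly restating the lemma with the honest bound $O(M\log\delta^{-1})$.
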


By the lemma, we can partition $\cV'=\bigsqcup_j \cV'_{j}$ into $O(1)$ subsets, so that \eqref{condi} holds for $\cV'_{j}$ with $C=1$. Applying the induction hypothesis \eqref{kakeyaineq} and triangle inequality, we obtain
\[ \|\sum_{V'\in \cV'}1_{\cN_{\de K}(\bV')}\|_{L^p(\Aloc(l,n))}^p\le C C_\e (\de K)^{-(m-l)(d-m)(p-1)-\e p}(\sum_{V'\in \cV'} |\cN_{\de K}(\bV')|). \]
Summing over $M\in \M_{Q_{K^{-1}}}$, we can bound the left hand side of \eqref{ineq1} by
\begin{align*}
     &\|\sum_{V\in \cV, \vec\bw(V)\in Q_{K^{-1}}}1_{\cN_{\de}(\bV)}\|_{L^p(\Aloc(l,n))}^p\\
     \le &CK^{-(N-k)} C_\e (\de K)^{-(m-l)(d-m)(p-1)-\e p}\sum_{M\in \M_{Q_{K^{-1}}}}\sum_{V'\in \cV'_M} |\cN_{\de K}(\bV')|\\
     \lesssim &C C_\e (\de K)^{-(m-l)(d-m)(p-1)-\e p}\sum_{V\in \cV, \vec\bw(V)\in Q_{K^{-1}}} |\cN_{\de}(\bV)|
\end{align*}
(The constant $C$ changes from line to line, but is independent of $K$ and $\de$.)

Summing over $Q_{K^{-1}}\in \cQ_{K^{-1}}$, we obtain
\[ \tx{I}\le C (\log K)^pK^{-\e p}  C_\e \de^{-(m-l)(d-m)(p-1)-\e p}\sum_{V\in\cV}|\cN_\de(\bV)|. \]
Combining with \eqref{estII}, we can bound the left hand side of \eqref{1term} by
\begin{align*}
\|\sum_{V\in\cV}1_{\cN_\de(\bV)}\|_{L^p(\Aloc(l,n))}^p\lesssim C (\log K)^pK^{-\e p}  C_\e \de^{-(m-l)(d-m)(p-1)-\e p}\sum_{V\in\cV}|\cN_\de(\bV)|\\
+C_{K,\e}\de^{-\e^2-(m-l)(d-m)(p-1)}\sum_{V\in\cV}|\cN_\de(\bV)|.
\end{align*} 
To close the induction, it suffices to make
\begin{equation}\label{1}
    C (\log K)^pK^{-\e p} \le 1/2 
\end{equation} 
and
\begin{equation}\label{2}
    C_{K,\e}\de^{-\e^2}\le \frac12 \de^{-\e p}C_\e.
\end{equation}
We first choose $K$ sufficiently large so that \eqref{1} holds. Then we choose $C_\e$ large enough so that \eqref{2} holds.

\section{Proof of Theorem \ref{mainthm}}\label{sec5}

We show that Theorem \ref{kakeyathm} implies Theorem \ref{mainthm} through a standard $\de$-discretization argument. Such argument was carried out in \cite[Section 3]{zahl2022unions} for a simple setup, but it easily generalizes to our setup. 

We will skip part of the technical details on the measurability which already appeared in \cite[Section 3]{zahl2022unions}, and focus on how the numerology works for general $k$.

We first localize $\cV$ to a subset of $\Aloc(m,n)$. Just like $\R^n$ can be covered by countably many unit balls, $A(m,n)$ can be covered by countably many translated and rotated copies of $\Aloc(m,n)$. We may choose one of the $\Aloc(m,n)$ so that $\dim(\cV\cap \Aloc(m,n))=\dim(\cV)=(m+1)(d-m)+\beta$. After change of coordinate, we assume this $\Aloc(m,n)$ is the one defined in Definition \ref{def3.5}. Also, we still use $\cV$ to denote $\cV\cap \Aloc(m,n)$, so now $\cV\subset \Aloc(m,n)$.

We just need to consider the case when $\beta\in (0,l+1]$. If $\beta>l+1$, we just replace $\beta$ by $l+1$ and throw away some $k$-planes in $\cV$ to make $\dim (\cV)=(m+1)(d-m)+l+1$. 
If $\beta=0$, we just replace $\beta$ by $l+1$ and $d$ by $d-1$. Next, note that it suffices to prove a variant of \eqref{mainbound}, where the right hand side is replaced by $(l+1)(d-l)+\beta-\e$ for arbitrary $\e>0$.

Fix such an $\e$. Let $\mathbf{X}=\bigcup_{V\in\cV}\bigg(A(l,V)\cap \Aloc(l,n)\bigg)$. Since $V\in\cV\subset \Aloc(m,n)$, we have
\[ \cH^{(m-l)(l+1)}\bigg(A(l,V)\cap \Aloc(l,n)\bigg)\gtrsim 1. \]
Our goal is to show \[\dim(\mathbf{X})\ge (l+1)(d-l)+\beta-\e.\]

Since $\dim(\cV)=(m+1)(d-m)+\beta$. By Frostman's lemma, there is a probability measure $\mu$ supported on $\cV$, such that for any $Q_r\subset \Aloc(m,n)$ being a ball of radius $r$, we have
\begin{equation}\label{frost}
    \mu(Q_r)\le C_0 r^{(m+1)(d-m)+\beta-\e}. 
\end{equation}

Let $k_0$ be a large integer that will be chosen later.
Cover $\mathbf{X}$ by a union $\bigcup_{k=k_0}^\infty \bigcup_{B\in\cB_k}B$, where $\cB_k$ is a collection of disjoint balls of radius $2^{-k}$ in $\Aloc(l,n)$, and 
\begin{equation}\label{lowerbound}
    \sum_k 2^{-k(\dim \mathbf{X}+\e)}\#\cB_k=:C_1<\infty.
\end{equation}
Here, each $B$ is $(l+1)(n-l)$ dimensional.

For each $V\in\cV$, there is an index $k=k(V)\ge k_0$, so that
\begin{equation}\label{lowerbound2}
    |\cH^{(m-l)(l+1)}\bigg(A(l,V)\cap \bigcup_{B\in\cB_k}B\bigg)|\gtrsim \frac{1}{k^2}. 
\end{equation} 
For each index $k$, define 
\[\cV^{(k)}=\{V\in\cV:k(V)=k\}.\] 
Then 
\[\sum_{k\ge k_0}\mu(\cV^{(k)})=1, \]
so there exists an index $k_1\ge k_0$ with $\mu(\cV^{(k_1)})\ge \frac{1}{k_1^2}$. From now on, we just define $\de=2^{-k_1}$ and $\mathbf{E}_\de=\bigcup_{B\in\cB_{k_1}}B.$

From \eqref{lowerbound}, we see that

\begin{equation}\label{lowerbd}
    |\mathbf{E}_\de|=\#\cB_{k_1}2^{-k_1(l+1)(n-l)}\le C_1 \de^{(l+1)(n-l)-\e-\dim \mathbf{X}}.
\end{equation}

Noting that $\mu$ is a $\big((k+1)(d-k)-\e\big)$-dimensional Frostman measure (see \eqref{frost}), by \cite[Lemma 8]{dote2022exceptional}, there exists a subset $\cV_1\subset \cV^{(k_1)}$ that satisfies $\# \cV_1\gtrsim \frac{1}{k_1^2}\de^{\e-(m+1)(d-m)-\beta}$, and
\[ \#\{V\in\cV_1: V\in Q_r\}\lesssim \de^{-\e}(r/\de)^{(m+1)(d-m)+\beta} \]
for any $Q_r\subset \Aloc(m,n)$ being a ball of radius $r$ ($\de\le r\le 1$).

In what follows, we will write $A\lessapprox B$ to mean $A\lesssim |\log\de|^{O(1)} B$.

By Lemma \ref{goodlem}, we can find $\cV_2\subset \cV_1$ so that $\# \cV_2\gtrsim \de^{O(\e)-\big((m+1)(d-m)+\beta\big)}$, and $\cV_2$ satisfies
\[ \#\{V\in\cV_2: V\in Q_r\}\le (r/\de)^{(k+1)(d-k)+\beta}. \]
From \eqref{lowerbound2}, we see that for any $V\in\cV_2$, 
\begin{equation}\label{lowerbd2}
    |\cN_\de(\bV)\cap \mathbf{E}_\de|\gtrapprox  |\cN_\de(\bV)|.
\end{equation} 

Now, we can apply Theorem \ref{kakeyathm} to $\cV_2$. 
By \eqref{kakeyaineq} and H\"older's inequality, we have
\[ \big(\int_{\mathbf{E}_\de} \sum_{V\in\cV_2} 1_{\cN_\de(\bV)}\big)\big| 
\mathbf{E}_\de\big|^{-1/p'}\le \| \sum_{V\in\cV_2} 1_{\cN_\de(\bV)}\|_p\le C_\e \de^{-\frac{(m-l)(d-m)}{p'}-\e}\big(\sum_{V\in\cV_2}|\cN_\de(\bV)|\big)^{1/p}.    \]
Noting \eqref{lowerbd2}, this implies 
\[ | \mathbf{E}_\de |\gtrsim \de^{(m-l)(d-m)+p'\e}\big(\sum_{V\in\cV_2}|\cN_\de(\bV)|\big). \]
Combined with $\sum_{V\in\cV_2}|\cN_\de(\bV)|=\#\cV_2 \de^{(l+1)(n-m)}\gtrsim \de^{O(\e)-(m+1)(d-m)-\beta} \de^{(l+1)(n-m)}$ and \eqref{lowerbd}, we obtain that
\[ \dim \mathbf{X}\ge (l+1)(d-l)+\beta-O(\e) \]
as $\de$ goes to $0$.

\newpage

\appendix
\section{\texorpdfstring{$k$}{}-geodesics and projections in Grassmannians}

In this section, we show some results realated to geodesics and projections in Grassmannians. The tool we use is called the singular value decomposition. Since it is fundamental in linear algebra, we may be sketchy in some steps.
For more details on singular value decomposition, we refer to \cite{golub2013matrix}. See also the discussions in \cite{conway1996packing}. 

We first recall the singular value decomposition. For $1\le l\le m$, let $A$ be any $l\times m$ real matrix. Then there exist orthogonal matrices $\Omega_1$ and $\Omega_2$ such that
\[ \Omega_1 A \Omega_2 =\begin{bmatrix}
    \lambda_1 & & &\\
    & \ddots & &\\
    & & \lambda_r &\\
    & & & 0
\end{bmatrix}, \]
where $\lambda_1\ge \lambda_2\ge\dots\ge \lambda_r>0$, and $\lambda_1^2,\dots,\lambda_r^2$ are all the nonzero eigenvalues of $A A^T$.

Given integers $1\le l<n$, we use $G(l,n)$ to denote the set of $l$-dimensional subspaces in $\R^{n}$. The natural isomorphism $G(l,n)\cong O(n)/O(l)\times O(n-l)$ endowed $G(l,n)$ with a Riemannian metric $g_\circ$ which is invariant under the action of $O(n)$. 

The distance function induced by $g_\circ$ is characterized using principle angles. Given $V, W\in G(l,n)$, we choose an orthonormal basis $\{\wt v_i\}$ for $V$ and an orthonormal basis $\{\wt w_j\}$ for $W$. Consider the $l\times l$ matrix $A$ whose $(i,j)$-entry is $v_i\cdot w_j$. In other words,
\[ A=[\wt v_1\  \wt v_2\ \cdots\ \wt v_l]\left[ \begin{array}{cc}
     \wt w_1^T  \\
     \wt w_2^T\\
     \vdots\\
     \wt w_l^T 
\end{array} \right]. \]
Suppose the singular value decomposition for $A$ is
\[ \Omega_1 A \Omega_2 =\begin{bmatrix}
    \lambda_1 & & &\\
    & \ddots & &\\
    & & \lambda_r &\\
    & & & 0
\end{bmatrix}. \]
Then, we define the principal angles between $V$ and $W$ as 
\[\theta_1=\arccos \lambda_1,\dots,\theta_{r}=\arccos\lambda_r, \theta_{r+1}=\dots=\theta_l=0.\]

Actually, the principal angles between $V$ and $W$ can also be defined recursively as
\[\cos\theta_i=\max_{v\in V}\max_{w\in W}v\cdot w=v_i\cdot w_i,\]
for $i=1,\dots,l+1$, subject to $\|v\|=\|w\|=1$, $v\cdot v_j=w\cdot w_j=0$ for $j=1,\dots,i-1$. Moreover, $\{v_i\}$ and $\{w_i\}$ are orthonormal bases of $V$ and $W$ respectively. Also, $v_i\cdot w_j=\cos\theta_i$ if $i=j$, $v_i\cdot w_j=0$ if $i\neq j$. 
One actually sees that 
\[ [v_1\ v_2\ \dots\ v_l]=\Omega_1[\wt v_1\ \wt v_2\ \dots\ \wt v_l],\ [w_1\ w_2\ \dots\ w_l]=\Omega_1[\wt w_1\ \wt w_2\ \dots\ \wt w_l]. \]

Suppose the principal angles between $V$ and $W$ are $\theta_1,\dots,\theta_l$, then the distance between $V$ and $W$ is given by
\[ d_{g_\circ}(V,W)=\sqrt{\theta_1^2+\dots+\theta_{l}^2}. \]

Using $\{v_i\}$ and $\{w_j\}$, we can explicitly construct a geodesic connecting $V$ and $W$. Let $\ga: [0,1]\rightarrow G(l,n)$ defined by 
\[\ga(t)=\text{span}\{\cos( t\theta_1) v_1+\sin( t\theta_1) v_1^\perp,\dots, \cos( t\theta_{l}) v_{l}+\sin( t\theta_{l}) v_{l}^\perp\}.\]
Here, each $v_i^\perp$ lies in $\text{span}\{v_i,w_i\}$ and is perpendicular to $v_i$, so that $\cos(\theta_i)v_i+\sin(\theta_i)v_i^\perp=w_i$. From the construction, we see that $\ga(0)=V, \ga(1)=W$. One can also check $d_{g_\circ}(V,\ga(t))=t\cdot d_{g_\circ}(V,W)$.

Now, we easily deduce the following result.
\begin{lemma}\label{lemm1}
    Let $\Pi\in G(m,n)$, where $l\le m\le n$. Then, $G(l,\Pi)$ is a totally geodesic submanifold of $(G(l,n), g_\circ)$.
\end{lemma}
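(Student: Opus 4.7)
The proof is immediate from the explicit geodesic formula that was just derived. The strategy is to show that the minimizing geodesic between any two points of $G(l,\Pi)$ stays inside $G(l,\Pi)$, which (since $G(l,\Pi)$ is a smooth submanifold and every tangent vector at a point is realized as the initial velocity of such a geodesic to a nearby point) suffices to conclude that $G(l,\Pi)$ is totally geodesic.

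First I would take arbitrary $V, W \in G(l,\Pi)$ and apply the singular value decomposition recalled above to produce the principal angles $\theta_1,\dots,\theta_l$ together with the adapted orthonormal bases $\{v_i\}$ of $V$ and $\{w_i\}$ of $W$ satisfying $v_i \cdot w_j = \cos\theta_i \,\delta_{ij}$. The key observation is that since $V, W \subset \Pi$, all of the $v_i$ and $w_i$ lie in $\Pi$, and therefore each auxiliary vector $v_i^\perp \in \mathrm{span}\{v_i, w_i\} \subset \Pi$. Plugging these vectors into the explicit formula
\[
\gamma(t) = \mathrm{span}\{\cos(t\theta_i)\, v_i + \sin(t\theta_i)\, v_i^\perp : 1 \le i \le l\}
\]
shows that $\gamma(t) \subset \Pi$ for every $t \in [0,1]$, i.e., $\gamma(t) \in G(l,\Pi)$.

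Next I would promote this to the totally geodesic property. Since $G(l,\Pi)$ is itself a Grassmannian, it carries its own $O(m)$-invariant Riemannian metric, and the same principal-angle construction (carried out using only vectors inside $\Pi$) produces the geodesic of $G(l,\Pi)$ between $V$ and $W$. Because principal angles depend only on the pair of subspaces and not on the ambient space, the intrinsic geodesic of $G(l,\Pi)$ coincides with the ambient geodesic $\gamma$ in $G(l,n)$. Thus every (sufficiently short) geodesic of $G(l,\Pi)$ in its induced metric is a geodesic of $(G(l,n), g_\circ)$, which is the standard characterization of a totally geodesic submanifold. Equivalently, every tangent direction to $G(l,\Pi)$ at a point $V$ can be realized as $\dot\gamma(0)$ for a pair $(V,W)$ with $W \in G(l,\Pi)$ close to $V$, so the ambient geodesic through $V$ in that direction is trapped inside $G(l,\Pi)$.

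The only genuine subtlety — the main obstacle, if any — is checking that one can consistently take the principal-angle bases so that the $v_i^\perp$'s actually lie in $\Pi$; but this is automatic from $v_i^\perp \in \mathrm{span}\{v_i, w_i\}$, which is the content of the explicit construction that the authors have already recorded. Everything else is bookkeeping, so I expect the proof to be short once the explicit geodesic is in hand.
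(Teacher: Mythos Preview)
Your proposal is correct and follows exactly the same approach as the paper: the paper's proof is the single sentence ``We just note that given $V, W\in G(l,n)$ such that $V,W\subset \Pi$, then the geodesic connecting them is also contained in $\Pi$,'' relying on the explicit principal-angle geodesic formula just as you do. Your write-up simply fills in the details (in particular the passage from ``minimizing geodesics between pairs of points stay in the submanifold'' to ``totally geodesic'') that the paper leaves implicit.
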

\begin{proof}
    We just note that given $V, W\in G(l,n)$ such that $V,W\subset \Pi$, then the geodesic connecting them is also contained in $\Pi$.
\end{proof}

Next, we talk about the projections. Let $\Pi\in G(m,n)$, where $l\le m\le n$.  We already proved that $G(l,\Pi)$ is a totally geodesic submanifold of $G(l,n)$. For simplicity, we denote $G(l,\Pi)$ by $\mathbf{\Pi}$. We want to describe the projection $\pi_\mathbf{\Pi}: G(l,n)\rightarrow \mathbf{\Pi}$. Recall the definition of $\pi_{\mathbf{\Pi}}$: given $V\in G(l,n)$, then $W=\pi_{\mathbf{\Pi}}(V)$ satisfies $d_{g_\circ}(V,W)=d_{g_\circ}(V,\mathbf{\Pi})$. If such $W$ is not be unique, then we just choose one of them. We have

\begin{lemma}\label{lemm2}
     Let $\Pi\in G(m,n),$ where $l\le m\le n$. Then for any $V\in G(l,n)$, $\pi_\Pi(V)\subset\pi_{\mathbf{\Pi}}(V)$. Here, $\pi_\Pi:\R^{n}\rightarrow \Pi$ is the standard orthogonal projection.
\end{lemma}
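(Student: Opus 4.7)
The plan is to reduce to a canonical form via singular value decomposition and then read off both $\pi_\Pi(V)$ and any minimizer $\pi_{\mathbf{\Pi}}(V)$ explicitly in that form.

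Concretely, applying SVD to the matrix of inner products between arbitrary orthonormal bases of $V$ and of $\Pi$, I would obtain orthonormal bases $v_1,\dots,v_l$ of $V$ and $u_1,\dots,u_m$ of $\Pi$ satisfying $v_i\cdot u_j=\cos\theta_i\,\delta_{ij}$ for $j\le l$ and $v_i\cdot u_j=0$ for $j>l$, where $\theta_1,\dots,\theta_l$ are the principal angles between $V$ and $\Pi$. In this basis the ambient orthogonal projection is diagonal: $\pi_\Pi(v_i)=\cos\theta_i\,u_i$, so $\pi_\Pi(V)=\textup{span}\{u_i:\cos\theta_i>0\}$.

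Next, let $W\in G(l,\Pi)$ be any minimizer of $d_{g_\circ}(V,\cdot)$ on $\mathbf{\Pi}$. I would pick an orthonormal basis $w_1,\dots,w_l$ of $W$ and expand $w_j=\sum_k a_{jk}u_k$. The inner-product matrix $M_{ij}=v_i\cdot w_j=\cos\theta_i\cdot a_{ji}$ factors as $M=D\tilde A^T$, where $D=\textup{diag}(\cos\theta_1,\dots,\cos\theta_l)$ and $\tilde A=(a_{jk})_{1\le j,k\le l}$. Since the full $l\times m$ coefficient matrix $A=(a_{jk})$ has orthonormal rows, one has $\tilde A\tilde A^T\le I$, and the standard singular value inequality $\sigma_i(BC)\le\sigma_i(B)\|C\|$ yields $\sigma_i(M)\le\cos\theta_i$. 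Equivalently, the principal angles $\varphi_i$ between $V$ and $W$ satisfy $\varphi_i\ge\theta_i$ termwise, so $d(V,W)^2=\sum\varphi_i^2\ge\sum\theta_i^2=d(V,\mathbf{\Pi})^2$; in particular, the explicit choice $W=\textup{span}(u_1,\dots,u_l)$ realizes the distance, confirming the right-hand side.

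The hypothesis that $W$ is a minimizer turns every inequality into an equality. A short trace argument handles this: letting $t$ be the number of strictly positive cosines and $D_t$ the corresponding invertible diagonal block, equality of the eigenvalues of $D_t X D_t$ with those of $D_t^2$, for the psd matrix $X\le I$ obtained as the top-left $t\times t$ block of $\tilde A^T\tilde A$, forces $X=I_t$ via $\textup{tr}(D_t^2(I-X))=0$. This gives $\sum_j a_{ji}^2=1$ for each $i$ with $\cos\theta_i>0$, whence $\|\pi_W(u_i)\|^2=\sum_j a_{ji}^2=1$; since an orthogonal projection strictly shortens vectors outside the target subspace, this forces $u_i\in W$. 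Therefore $\pi_\Pi(V)=\textup{span}\{u_i:\cos\theta_i>0\}\subset W$, as required. The only real obstacle is the equality analysis for the singular value inequality, i.e.\ extracting orthonormality of the first $t$ columns of $\tilde A$ from the minimization; this is standard positive-definite linear algebra and should go through without surprises.
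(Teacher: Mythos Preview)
Your proof is correct, and it actually establishes a slightly stronger statement than the paper's: you show that $\pi_\Pi(V)\subset W$ for \emph{every} minimizer $W\in G(l,\Pi)$, whereas the paper only exhibits one particular minimizer and verifies the containment for it. The routes are genuinely different. The paper proceeds constructively: it uses the recursive principal-angle definition between $V$ and $\Pi$ to produce orthonormal systems $\{v_i\}\subset V$ and $\{w_i\}\subset\Pi$, sets $W=\textup{span}\{w_i\}$, asserts that this $W$ realizes $d_{g_\circ}(V,\mathbf{\Pi})$, and then observes directly that each $w_i$ is parallel to $\pi_\Pi(v_i)$ because $w_i$ maximizes $v_i\cdot w$ over the relevant orthogonal complement inside $\Pi$. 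Your approach is analytic rather than constructive: you fix the SVD frame, take an arbitrary minimizer $W$, bound the singular values of the inner-product matrix via $\sigma_i(D\tilde A^T)\le\sigma_i(D)\|\tilde A^T\|$, and then run a trace/positivity argument on the equality case to force $u_i\in W$ for all $i$ with $\cos\theta_i>0$. The paper's argument is shorter and avoids the equality analysis entirely, at the cost of relying on the specific choice of $\pi_{\mathbf{\Pi}}(V)$; your argument is heavier on linear algebra but removes any ambiguity about which minimizer is meant.
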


\begin{proof}[Sketch of proof]
    Actually, we can also use the principal angles to describe $d_{g_\circ}(V,\mathbf{\Pi})$. The principle angles $\theta_1,\dots,\theta_{l}\in [0,\frac{\pi}{2}]$ between $V$ and $\Pi$ are recursively defined by 
    \[\cos\theta_i=\max_{v\in V}\max_{w\in \Pi}v\cdot w=v_i\cdot w_i,\]
for $i=1,\dots,l$, subject to $\|v\|=\|w\|=1$, $v\cdot v_j=w\cdot w_j=0$ for $j=1,\dots,i-1$.
Then,
\[ d_{g_\circ}(V,\mathbf{\Pi})=\sqrt{\theta_1^2+\dots+\theta_{l}^2}. \]
Moreover, $\{v_i\}$ and $\{w_i\}$ are orthonormal. If we denote $W=\textup{span}\{w_i\}$, then $d_{g_\circ}(V,W)=d_{g_\circ}(V,\mathbf{\Pi})$.

We want to show the $W$ obtained as above is contained in $\pi_\Pi(V)$. Actually, we will show $w_i\parallel \pi_\Pi(v_i)$. (We will use the convention the $\vec {0}$ is parallel to any vector.) Let $\Pi_i$ be the orthogonal complement of $\textup{span}\{w_1,\dots,w_{i-1}\}$ in $\Pi$.  In other words,
\[ \Pi=\textup{span}\{w_1,\dots,w_{i-1}\}\perp \Pi_i. \]
Then we have
\[ v_i\cdot w_i=\max_{w\in \Pi_i, \|w\|=1} v_i\cdot w.  \]
To maximize the dot product, we must have
\[ w_i\parallel \pi_{\Pi_i}(v_i). \]
Since $v_i$ is orthogonal to $\textup{span}\{w_1,\dots,w_{i-1}\}$, we have
\[ \pi_{\Pi_i}(v_i)=\pi_{\Pi}(v_i). \]
Therefore, we have
\[ w_i\parallel \pi_\Pi(v_i). \]
This shows that
\[ \pi_\Pi(V)=\textup{span}\{\pi(v_i)\}\subset \textup{span}\{w_i\}=W=\pi_{\mathbf{\Pi}}(V). \]
\end{proof}

\bigskip

We turn to $A(l,n)$.
For any $L\in G(l+1,n+1)$ that is not parallel to $\R^n\times \{0\}$, we see that $L\cap (\R^n\times \{1\})$ is an $l$-plane. Therefore, we identify a generic open subset of $G(l+1,n+1)$ with $A(l,\R^n\times \{0\})$. We just simply denote $A(l,\R^n\times \{0\})$ by $A(l,n)$. Using the embedding 
\[ A(l.n)\hookrightarrow G(l+1,n+1), \]
the Riemannian metric $g_\circ$ pulls back to a Riemannian metric on $A(l,n)$ for which we denote by $\wt g_\circ$. 

It is straightforward to see the following results which are reinterpretations of Lemma \ref{lemm1}, Lemma \ref{lemm2}.

\begin{lemma}\label{lemmm1}
    Let $\Pi\in A(m,n)$, where $l\le m\le n$. Then, $A(l,\Pi)$ is a totally geodesic submanifold of $(A(l,n), \wt g_\circ)$.
\end{lemma}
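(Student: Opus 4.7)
The plan is to reduce Lemma \ref{lemmm1} to Lemma \ref{lemm1} via the embedding $\iota : A(l,n) \hookrightarrow G(l+1,n+1)$ introduced just before the statement, sending an affine $l$-plane $L \subset \R^n$ to the linear span $\wt L := \textup{span}(L \times \{1\}) \subset \R^{n+1}$. By construction $\wt g_\circ = \iota^{\,*} g_\circ$, so the geodesics of $(A(l,n),\wt g_\circ)$ are precisely the $\iota$-preimages of those geodesics of $(G(l+1,n+1), g_\circ)$ whose image stays inside the open subset $\iota(A(l,n))$.

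The first step is to identify the image $\iota(A(l,\Pi))$ explicitly. I would set $\wt\Pi := \textup{span}(\Pi \times \{1\}) \in G(m+1,n+1)$, an $(m+1)$-dimensional linear subspace of $\R^{n+1}$. For any $L \in A(l,\Pi)$, one has $L \times \{1\} \subset \Pi \times \{1\} \subset \wt\Pi$, hence $\wt L \subset \wt\Pi$, giving $\iota(A(l,\Pi)) \subset G(l+1,\wt\Pi)$. Conversely, any $M \in G(l+1,\wt\Pi)$ transverse to $\R^n \times \{0\}$ intersects $\R^n \times \{1\}$ in an affine $l$-plane contained in $\wt\Pi \cap (\R^n \times \{1\}) = \Pi \times \{1\}$, and this plane is precisely $\iota^{-1}(M) \in A(l,\Pi)$. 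Thus $\iota(A(l,\Pi))$ is exactly the open subset of $G(l+1,\wt\Pi)$ consisting of $(l+1)$-planes transverse to $\R^n \times \{0\}$.

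The second step is to apply Lemma \ref{lemm1} to $\wt\Pi \in G(m+1,n+1)$, which gives that $G(l+1,\wt\Pi)$ is a totally geodesic submanifold of $(G(l+1,n+1), g_\circ)$. Since total geodesicity is a local property and $\iota(A(l,\Pi))$ is an open subset of $G(l+1,\wt\Pi)$, this property pulls back along $\iota$ to show $A(l,\Pi)$ is a totally geodesic submanifold of $(A(l,n), \wt g_\circ)$. There is no real obstacle here; the only mild subtlety is that some geodesics of $G(l+1,\wt\Pi)$ do leave $\iota(A(l,n))$ when they pass through $(l+1)$-planes parallel to $\R^n \times \{0\}$, but total geodesicity only demands that every short geodesic initially tangent to the submanifold remains in it, which survives passage to open subsets.
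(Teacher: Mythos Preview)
Your proof is correct and follows precisely the route the paper intends: the paper simply declares Lemma \ref{lemmm1} a ``reinterpretation'' of Lemma \ref{lemm1} via the embedding $A(l,n)\hookrightarrow G(l+1,n+1)$, and you have spelled out exactly that reduction. The identification $\iota(A(l,\Pi)) = G(l+1,\wt\Pi)\cap \iota(A(l,n))$ and the observation that total geodesicity is local are the two ingredients needed, and both are handled correctly.
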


\begin{lemma}\label{lemmm2}
Let $\Pi\in G(m,n)$, where $l\le m\le n$. Then for any $L\in A(l,n)$, $\pi_{\Pi}(L)\subset\pi_{A(l,\Pi)}(L)$. 
\end{lemma}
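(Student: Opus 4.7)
The strategy is to lift everything to the Grassmannian $G(l+1,n+1)$ via the embedding $A(l,n)\hookrightarrow G(l+1,n+1)$ already set up in the appendix, and then reduce to Lemma \ref{lemm2}, which is the corresponding linear statement. Recall that under this embedding an affine $l$-plane $L\subset\R^n$ is viewed as $L\times\{1\}\subset\R^n\times\{1\}\subset\R^{n+1}$ and mapped to the $(l+1)$-plane $\wt L:=\tx{span}(L\times\{1\})\in G(l+1,n+1)$; and that the metric $\wt g_\circ$ on $A(l,n)$ is by definition the pullback of $g_\circ$.

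First, I will identify the relevant objects on the Grassmannian side. The linear subspace $\Pi\in G(m,n)$, regarded as an affine $m$-plane through the origin in $\R^n$, corresponds to $\wt\Pi=\Pi\times\R\in G(m+1,n+1)$ (the span of $\Pi\times\{1\}$). Under the same correspondence, $A(l,\Pi)$ is identified with an open subset of $G(l+1,\wt\Pi)\subset G(l+1,n+1)$. Because $\wt g_\circ$ is the pullback of $g_\circ$ and because $G(l+1,\wt\Pi)$ is totally geodesic in $G(l+1,n+1)$ (Lemma \ref{lemm1}) while $A(l,\Pi)$ is totally geodesic in $A(l,n)$ (Lemma \ref{lemmm1}), the closest point map $\pi_{A(l,\Pi)}(L)$ coincides with $\pi_{G(l+1,\wt\Pi)}(\wt L)$ under the identification, at least in the local regime where the nearest point is unique and lies in the open set identified with $A(l,\Pi)$ (which is the regime we always work in).

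Next comes the key computation: the Euclidean projection $\pi_\Pi(L)$ in $\R^n$ corresponds, under the lifting, to the Euclidean projection $\pi_{\wt\Pi}(\wt L)$ in $\R^{n+1}$. This is essentially because $\pi_{\wt\Pi}=\pi_\Pi\oplus\tx{id}_\R$ under the splitting $\R^{n+1}=\R^n\oplus\R\be_{n+1}$ and $\wt\Pi=\Pi\oplus\R\be_{n+1}$. Concretely, writing $L=x_0+L_0$ with $x_0\in\R^n$ and $L_0\in G(l,n)$, one checks that $\wt L=\tx{span}\{(x_0,1)\}+L_0\times\{0\}$, so
\[\pi_{\wt\Pi}(\wt L)=\tx{span}\{(\pi_\Pi(x_0),1)\}+\pi_\Pi(L_0)\times\{0\}=\widetilde{\pi_\Pi(L)},\]
matching the image under the lifting of the affine plane $\pi_\Pi(L)=\pi_\Pi(x_0)+\pi_\Pi(L_0)$ in $\R^n$.

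Finally, applying Lemma \ref{lemm2} with the pair $(\wt L,\wt\Pi)$ of dimensions $(l+1,m+1)$ in $\R^{n+1}$ yields $\pi_{\wt\Pi}(\wt L)\subset\pi_{G(l+1,\wt\Pi)}(\wt L)$. Translating back through the identifications established above, this is exactly $\pi_\Pi(L)\subset\pi_{A(l,\Pi)}(L)$. The main obstacle, and the one requiring the most care, is the step identifying the Riemannian nearest-point projection $\pi_{A(l,\Pi)}$ with $\pi_{G(l+1,\wt\Pi)}$ through the lifting; the other steps are essentially linear algebra. As the paper explicitly allows a sketch here, I would argue this identification via the totally geodesic property plus locality rather than dwell on technicalities.
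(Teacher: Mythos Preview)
Your proposal is correct and follows exactly the approach the paper intends: the paper does not write out a proof for Lemma \ref{lemmm2} at all, merely stating that it is a ``reinterpretation'' of Lemma \ref{lemm2} via the embedding $A(l,n)\hookrightarrow G(l+1,n+1)$, which is precisely the lifting you carry out. Your explicit identification $\wt\Pi=\Pi\times\R$, the computation $\pi_{\wt\Pi}=\pi_\Pi\oplus\tx{id}_\R$, and the verification that $\widetilde{\pi_\Pi(L)}=\pi_{\wt\Pi}(\wt L)$ are all correct and in fact supply details the paper omits.
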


 Here, $\pi_\Pi:\R^{n}\rightarrow \Pi$ is the standard orthogonal projection. $\pi_{A(l,\Pi)}: A(l,n)\rightarrow A(l,\Pi)$ is the projection in $A(l,n)$. We also remark that if we replace $\Pi\in G(m,n)$ by $\Pi\in A(m,n)$, the result is not true.

\bibliographystyle{abbrv}
\bibliography{bibli}
%***************************************

\end{document}